\documentclass[leqno, 12pt]{article}
\usepackage{amsmath,amsfonts,amsthm,amssymb,indentfirst}
\usepackage{xy} \xyoption{all}
\usepackage[colorlinks]{hyperref}
\hypersetup{linkcolor=blue, urlcolor=blue, citecolor=red}

\setlength{\textwidth}{6.5in} \setlength{\textheight}{9in}
\setlength{\evensidemargin}{0in} \setlength{\oddsidemargin}{0in}
\setlength{\topmargin}{-.6in}

\newtheorem{theorem}{Theorem}
\newtheorem{lemma}[theorem]{Lemma}
\newtheorem{definition}[theorem]{Definition}
\newtheorem{corollary}[theorem]{Corollary}
\newtheorem{proposition}[theorem]{Proposition}

\theoremstyle{definition}
\newtheorem{example}[theorem]{Example}

\newcommand{\End}{\mathrm{End}}

\newcommand{\rad}{\mathrm{rad}}

\newcommand{\cl}{\mathrm{cl}}

\newcommand{\tnil}{\mathrm{TNil}}

\newcommand{\Z}{\mathbb{Z}}

\newcommand{\M}{\mathbb{M}}

\newcommand{\B}{\mathcal{B}}

\newcommand{\U}{\mathcal{U}}

\begin{document}

\title{Infinite-Dimensional Triangularizable Algebras}

\author{Zachary Mesyan}

\maketitle

\begin{abstract}
Let $\End_k(V)$ denote the ring of all linear transformations of an arbitrary $k$-vector space $V$ over a field $k$. We define $X \subseteq \End_k(V)$ to be \emph{triangularizable} if $V$ has a well-ordered basis such that $X$ sends each vector in that basis to the subspace spanned by basis vectors no greater than it. We then show that an arbitrary subset of $\End_k(V)$ is \emph{strictly} triangularizable (defined in the obvious way) if and only if it is topologically nilpotent. This generalizes the theorem of Levitzki that every nilpotent semigroup of matrices is triangularizable. We also give a description of the triangularizable subalgebras of $\End_k(V)$, which generalizes a theorem of McCoy classifying triangularizable algebras of matrices over algebraically closed fields.

\medskip

\noindent
\emph{Keywords:} triangular matrix, linear transformation, simultaneous triangularization, triangularizable algebra, nilpotent semigroup, endomorphism ring, function topology

\noindent
\emph{2010 MSC numbers:} 15A04, 16S50 (primary); 16W80, 47D03 (secondary)
\end{abstract}


\section{Introduction}

Given a field $k$ and a $k$-vector space $V$, we denote by $\End_k(V)$ the $k$-algebra of all linear transformations of $V$. We define a transformation $T\in \End_k(V)$ to be \emph{triangularizable} if $V$ has a well-ordered basis $(\B, \leq)$ such that $T$ sends each vector $v \in \B$ to the subspace spanned by $\{u \in \B \mid u \leq v\}$. If $V$ is finite-dimensional, then this notion is clearly equivalent to $T$ being representable as an upper-triangular matrix with respect to some basis for $V$, which in turn is equivalent to $T$ being representable as a lower-triangular matrix with respect to some basis for $V$. The above definition of triangularizability was first introduced in~\cite{ZM}, where various equivalent characterizations of transformations of this sort are given, along with other facts about them. 

In this paper we focus on subsets of $\End_k(V)$ that are (simultaneously) triangularizable, that is, ones where all elements are triangularizable with respect to a common well-ordered basis for $V$. We give a general tool for showing sets of transformations to be triangularizable (Lemma~\ref{tri-lemma}), which is analogous to the so-called ``Triangularization Lemma", proved for bounded linear operators on Banach spaces by Radjavi and Rosenthal~\cite{RR}. Using this tool we show that a subset $X$ of $\End_k(V)$ is triangularizable if and only if there exists a well-ordered (by inclusion) set of $X$-invariant subspaces of $V$, which is maximal as a well-ordered set of subspaces of $V$ (Proposition~\ref{subspaces-prop}).

Defining a transformation $T\in \End_k(V)$ to be \emph{strictly} triangularizable if $V$ has a well-ordered basis $(\B, \leq)$ such that $T$ sends each vector $v \in \B$ to the subspace spanned by $\{u \in \B \mid u < v\}$, we then characterize the strictly (simultaneously) triangularizable subsets of $\End_k(V)$ in Theorem~\ref{strictly-upper}. Specifically, $X \subseteq \End_k(V)$ is strictly triangularizable if and only if $X$ is topologically nilpotent (i.e., given any infinite list $T_1, T_2, T_3, \ldots \in X$ and any finite-dimensional subspace $W$ of $V$, one has $T_n \cdots T_2T_1(W)=0$ for some positive integer $n$). As immediate consequences of this theorem we obtain the well-known result of Levitzki~\cite{Levitzki} that every nilpotent multiplicative semigroup of matrices is triangularizable, and the statement that the nonunital $k$-algebra generated by a topologically nilpotent subset of $\End_k(V)$ is also topologically nilpotent (Corollary~\ref{nilp-alg-cor}). Additionally, this theorem generalizes a recent result of Chebotar~\cite{Chebotar}, who used our approach to triangularization to answer a question of Greenfeld, Smoktunowicz, and Ziembowski. Theorem~\ref{strictly-upper} can also be viewed as an analogue of the result~\cite[Corollary 11]{Turovskii} of Turovskii that a quasinilpotent semigroup (i.e., one where the spectrum of each of its operators is contained in $\{0\}$) of compact operators on a Banach space is chain-triangularizable (that is, possesses a maximal chain of invariant subspaces).

Most of the paper, starting with Section~\ref{rad-sect}, is devoted to describing triangularizable subalgebras of $\End_k(V)$. In Theorem~\ref{main-theorem} we show that a subalgebra $R$ of $\End_k(V)$ is triangularizable if and only if $R$ is contained in a $k$-subalgebra $A$ of $\End_k(V)$ such that $A/\rad(A) \cong k^{\Omega}$ as topological $k$-algebras for some set $\, \Omega$, and $\, \rad (A)$ is topologically nilpotent. We then give another equivalent condition to these in the case where the field $k$ is algebraically closed (Corollary~\ref{alg-closed-cor}). A consequence of Theorem~\ref{main-theorem} is that a subalgebra $R$ of a matrix ring $\M_n(k)$ is triangularizable if and only if $R/\rad(R) \cong k^m$ as $k$-algebras, for some positive integer $m$, and moreover, if $k$ is algebraically closed, then $R$ being triangularizable is also equivalent to $R/\rad(R)$ being commutative (Corollary~\ref{McCoy-cor}). This generalizes a well-known result of McCoy~\cite{McCoy}. Various examples illustrating our results are given along the way.

\section{Preliminaries}

All rings will be assumed to be unital, unless stated otherwise. We denote the set of the integers by $\Z$, and the set of the positive integers by $\Z^+$. Also, given a ring $R$ and $n \in \Z^+$ we denote by $\M_n(R)$ the ring of all $n\times n$ matrices over $R$.

In the following subsections we review the standard topology on $\End_k(V)$, some basics of the theory of topological rings, and a few facts about triangularizable elements of $\End_k(V)$.

\subsection{Topological Rings} \label{top-sect}

Let $X$ and $Y$ be sets, and let $Y^X$ denote the set of all functions $X \to Y$.
The \emph{function} (or \emph{finite}, or \emph{pointwise}) \emph{topology} on $Y^X$ has a base of open sets of the following form: $$\{f \in Y^X \mid f(x_1) = y_1, \dots, f(x_n) = y_n\} \ (x_1, \dots, x_n \in X, y_1, \dots, y_n \in Y).$$ It is straightforward to see that this coincides with the product topology on $Y^X = \prod_X Y$, where each component set $Y$ is given the discrete topology.  As a product of discrete spaces, this space is Hausdorff.

Now let $V$ be a vector space over a field $k$. Then $\End_k(V) \subseteq V^V$ inherits a topology from the function topology on $V^V$, which we shall also call the \emph{function topology}. Under this topology $\End_k(V)$ is a topological ring (see, e.g.,~\cite[Theorem~29.1]{Warner}), i.e., a ring $R$ equipped with a topology that makes $+:R\times R \to R$, $-:R \to R$, and $\cdot :R\times R \to R$ continuous. Alternatively, we may describe the function topology on $\End_k(V)$ as the topology having a base of open sets of the following form: $$\{S \in \End_k(V) \mid S|_W=T|_W\} \ (T \in \End_k(V), W \subseteq V \text{ a finite-dimensional subspace}),$$ where $S|_W$ and $T|_W$ denote the restrictions of $S$ and $T$, respectively, to $W$. Observe that when $V$ is finite-dimensional, $\End_k(V)$ is discrete in this topology.

Let us also review some standard facts about topological rings that will be needed later on. Given any topological ring $R$ and an ideal $I$ of $R$, the quotient $R/I$ may be viewed as a topological ring in the \emph{quotient topology}, where a subset of $R/I$ is open if and only if its preimage under the canonical projection $R \to R/I$ is open in $R$. (See, e.g.,~\cite[Theorem 5.4]{Warner} for details.) Also, two topological rings $R_1$ and $R_2$ are \emph{isomorphic as topological rings} if there is a \emph{topological isomorphism} $R_1 \to R_2$, i.e., a function that is both a ring isomorphism and a homeomorphism. The concept \emph{isomorphic as topological $k$-algebras} is defined analogously. 

Finally recall that a map $\phi : X \to Y$ of topological spaces is \emph{open} if $\phi (U)$ is open whenever $U$ is an open subset of $X$, and it is \emph{closed} if $\phi (U)$ is closed whenever $U$ is a closed subset of $X$.

\subsection{Triangular Transformations}

Recall that a binary relation $\leq$ on a set $X$ is a \emph{partial order} if it is reflexive, antisymmetric, and transitive. If, in addition, $x\leq y$ or $y \leq x$ for all $x,y \in X$, then $\leq$ is a \emph{total order}. If $\leq$  is a total order and, moreover, every nonempty subset of $X$ has a least element with respect to $\leq$, then $\leq$ is a \emph{well order}.

Given a subset $X$ of a vector space, we denote by $\langle X \rangle$ the subspace spanned by $X$. 

\begin{definition} \label{tri-def}
Let $k$ be a field, $\, V$ a $k$-vector space, $\B$ a basis for $\, V$, and $\, \leq$ a partial ordering on $\B$. 

We say that $T\in \End_k(V)$ is \emph{triangular with respect to} $(\B, \leq)$ if $T(v) \in \langle \{u \in \B \mid u \leq v\} \rangle$ for all $v \in \B$, and that $T$ is \emph{strictly triangular with respect to} $(\B, \leq)$ if $T(v) \in \langle \{u \in \B \mid u < v\} \rangle$ for all $v \in \B$.

We say that $X \subseteq \End_k(V)$ is \emph{triangular with respect to} $(\B, \leq)$, respectively \emph{strictly triangular with respect to} $(\B, \leq)$, if each $T \in X$ is triangular with respect to $\, (\B, \leq)$, respectively, strictly triangular with respect to $\, (\B, \leq)$.

If $T\in \End_k(V)$ $($or $X \subseteq \End_k(V)$$)$ is triangular, respectively strictly triangular, with respect to some \emph{well-ordered} basis for $\, V$, then we say that $T$ $($or $X$$)$ is \emph{triangularizable}, respectively \emph{strictly triangularizable}.
\end{definition}

Sometimes we shall find it more convenient to index bases with ordered sets rather than ordering the bases themselves, when dealing with triangularization. 

It is easy to see that in the case where $V$ is a finite-dimensional $k$-vector space, $T \in \End_k(V)$ is triangularizable in the above sense if and only if there is a basis for $V$ with respect to which $T$ is upper-triangular as a matrix, if and only if there is a basis for $V$ with respect to which $T$ is lower-triangular as a matrix. A more detailed discussion of the choices made in the above definition can be found in~\cite[Section 3]{ZM}, and various characterizations of triangularizable transformations are given in~\cite[Theorem 8]{ZM}.

Next we recall a couple of results from~\cite{ZM} that will be used frequently. Given $k$-vector spaces $W \subseteq V$ and a transformation $T \in \End_k(V)$, we say that $W$ is $T$-\emph{invariant} if $T(W) \subseteq W$.

\begin{lemma}[Part of Proposition 16 in \cite{ZM}] \label{inverse-prop}
Let $k$ be a field, $V$ a $k$-vector space, and $T \in \End_k(V)$ triangular with respect to some well-ordered basis $\, (\B, \leq)$ for $\, V$. Also for each $v \in \B$ let  $\pi_v \in \End_k(V)$ be the projection onto $\, \langle v \rangle$ with kernel $\, \langle \B \setminus \{v\} \rangle$. Then $T$ is invertible if and only if $\pi_vT\pi_v \neq 0$ for all $v \in \B$.  
\end{lemma}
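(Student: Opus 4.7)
The plan is to write $T = P + N$ where $P$ is the ``diagonal part'' of $T$, defined by $P(v) = \alpha_v v$ with $\alpha_v \in k$ the scalar determined by $\pi_v T \pi_v(v) = \alpha_v v$, and $N = T - P$. Triangularity of $T$ forces $N$ to be strictly triangular with respect to $(\B, \leq)$, and the condition ``$\pi_v T \pi_v \neq 0$ for every $v \in \B$'' translates exactly to ``$\alpha_v \neq 0$ for every $v \in \B$'', i.e., to $P$ being an invertible diagonal operator with $P^{-1}(v) = \alpha_v^{-1} v$.

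For the forward direction I would then factor $T = P(I + P^{-1}N)$. Since $P^{-1}$ preserves each line $\langle v \rangle$, $M := P^{-1}N$ is still strictly triangular, so it suffices to show $I + M$ is invertible for every strictly triangular $M$. The two-sided inverse should be the Neumann series $S := \sum_{n \geq 0}(-M)^n$, and the crucial step is to show that, applied to any $v \in \B$, this sum is actually finite. For that, whenever $M^n(v) \neq 0$ let $s_n \in \B$ be the maximum basis element in the (finite) support of $M^n(v)$; strict triangularity forces every element of $\supp M^{n+1}(v)$ to lie strictly below some element of $\supp M^n(v)$, hence below $s_n$, so $s_{n+1} < s_n$. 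Since $(\B, \leq)$ is well-ordered this strictly decreasing sequence must terminate, so $M^n(v) = 0$ for some $n$ and $S(v)$ is a well-defined finite sum. A routine telescoping then gives $(I + M)S = S(I + M) = I$.

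For the converse I would argue the contrapositive: suppose $\pi_v T \pi_v = 0$ for some $v$, and let $v$ be the least such element (here well-ordering is used a second time). The subspace $W := \langle \{u \in \B : u < v\}\rangle$ is $T$-invariant, and $T|_W$ is triangular with respect to the well-ordered basis $\{u : u < v\}$ of $W$, with every diagonal entry nonzero by minimality of $v$. The forward direction applied to $T|_W$ shows that $T|_W$ is invertible on $W$. Since $\pi_v T \pi_v = 0$ puts $T(v)$ into $W$, there is some $z \in W$ with $T(z) = T(v)$, and then $v - z$ is a nonzero element of $\ker T$ (nonzero because $v \notin W$), so $T$ is not invertible; the degenerate case in which $v$ is the least element of $\B$ is handled correctly, since then $W = \{0\}$, $z = 0$, and $v$ itself lies in $\ker T$.

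The technical heart of the whole argument is the termination step for the Neumann series: in finite dimensions strict triangularity forces $M$ to be nilpotent and this step is trivial, but in the infinite-dimensional setting $M$ can easily fail to be globally nilpotent, and the well-ordering of $\B$ is exactly the hypothesis needed to recover enough ``local nilpotence'' on basis vectors. Once that is in place, both directions reduce to standard manipulations, with well-ordering used a second time in the converse simply to pick a least offending basis element.
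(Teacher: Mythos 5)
The paper does not actually prove this lemma; it is imported as Part of Proposition 16 of the author's earlier paper \cite{ZM}, so there is no in-paper argument to compare against. Evaluated on its own, your proof is correct. The decomposition $T = P + N$ into a diagonal part $P$ and a strictly triangular part $N$, the observation that the hypothesis $\pi_v T \pi_v \neq 0$ for all $v \in \B$ is precisely the invertibility of $P$, the factorization $T = P(I + P^{-1}N)$ with $M := P^{-1}N$ still strictly triangular, and the pointwise-finite Neumann series for $(I + M)^{-1}$ are all sound. You also correctly isolate where well-ordering is essential: the maxima $s_n$ of $\supp M^n(v)$ form a strictly decreasing chain in $\B$, which must therefore terminate, so the series stabilizes on each basis vector. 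The converse via the least $v$ with $\pi_v T \pi_v = 0$, restricting to the $T$-invariant subspace $W = \langle \{u \in \B : u < v\}\rangle$, invoking the forward direction to invert $T|_W$, and producing the nonzero kernel element $v - z$ is also correct, and you handle the degenerate case $W = \{0\}$ properly. Two small presentational points, neither a gap: you should state explicitly that $S$ is a well-defined element of $\End_k(V)$ because it is specified on the basis $\B$ (where the series is actually a finite sum) and extended linearly; and the identities $(I+M)S = S(I+M) = I$ should be verified by the short telescoping computation on basis vectors, since the series does not converge in any normed sense but only stabilizes pointwise.
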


\begin{definition}
Let $k$ be a field, $V$ a $k$-vector space, and $X \subseteq \End_k(V)$. We say that $X$ is \emph{topologically nilpotent} if the sequence $\, (T_i \cdots T_2T_1)_{i=1}^\infty$ converges to $\, 0$ in the function topology on $\, \End_k(V)$, for every infinite list $T_1, T_2, T_3, \ldots \in X$ of not necessarily distinct transformations. Moreover, if $\, \{T\}$ is topologically nilpotent for some $T \in \End_k(V)$, then we shall refer to $T$ as \emph{topologically nilpotent}.
\end{definition}

\begin{proposition}[Proposition 20 in \cite{ZM}] \label{top-nilpt-prop}
Let $k$ be a field and $\, V$ a nonzero $k$-vector space. The following are equivalent for any $T \in \End_k(V)$.
\begin{enumerate}
\item[$(1)$] $T$ is topologically nilpotent.
\item[$(2)$] $V = \bigcup_{i=1}^\infty \ker (T^i)$.
\item[$(3)$] $T$ is strictly triangularizable.
\item[$(4)$] $T$ is triangularizable, and if $\, (\B, \leq)$ is a well-ordered basis for $\, V$ with respect to which $T$ is triangular, then $T$ is strictly triangular with respect to $\, (\B, \leq)$.
\item[$(5)$] $T$ is triangularizable, and for all $a \in k$, we have $\, \ker(T-a \cdot 1) \neq \{0\}$ if and only if $a=0$.
\end{enumerate}
\end{proposition}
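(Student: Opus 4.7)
The plan is to run a cycle of implications that establishes $(1) \Leftrightarrow (2) \Leftrightarrow (3)$ first, and then $(3) \Leftrightarrow (4) \Leftrightarrow (5)$ on top, with Lemma~\ref{inverse-prop} as the main tool for the last two equivalences. The equivalence $(1) \Leftrightarrow (2)$ is essentially a translation: in the function topology a sequence in $\End_k(V)$ converges to $0$ iff for every $v \in V$ it eventually kills $v$, so $(T^i)_i \to 0$ says precisely that every $v$ lies in some $\ker(T^n)$.

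For $(2) \Rightarrow (3)$, set $V_i = \ker(T^i)$, choose a nested chain of bases $\B_1 \subseteq \B_2 \subseteq \cdots$ with $\B_i$ a basis of $V_i$, and well-order $\B = \bigcup_i \B_i$ so that the elements adjoined at the $i$-th step come strictly above those in $\B_{i-1}$ (with an arbitrary well-ordering inside each level). Since $T(V_i) \subseteq V_{i-1}$, this witnesses strict triangularizability. For the return $(3) \Rightarrow (2)$, I would use transfinite induction on $v \in \B$ to show $T^{n_v}(v) = 0$ for some $n_v \in \Z^+$: the base case at the least element is automatic since $T(v_0) \in \langle \emptyset \rangle = 0$, and the inductive step follows because $T(v)$ is a finite combination of basis elements strictly less than $v$, each of which is killed by some power of $T$ by inductive hypothesis.

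Turning to $(3), (4), (5)$: the implication $(4) \Rightarrow (3)$ is trivial, and $(4) \Rightarrow (5)$ is straightforward, since picking a well-ordered basis $(\B, \leq)$ with respect to which $T$ is strictly triangular makes $T - a \cdot 1$ triangular with constant diagonal coefficient $-a$, so Lemma~\ref{inverse-prop} gives invertibility (hence zero kernel) for $a \neq 0$, while the least basis element of $\B$ sits in $\ker T$. For $(3) \Rightarrow (4)$, assume $T$ is triangular with respect to some well-ordered basis $(\B, \leq)$ but not strictly triangular there, and let $v$ be the least basis element where the diagonal coefficient $c$ of $T$ is nonzero; writing $T(v) = cv + \beta$ with $\beta \in \langle u \in \B \mid u < v \rangle$, a short induction yields $T^n(v) = c^n v + \gamma_n$ with $\gamma_n \in \langle u < v \rangle$, which is never zero since $c \neq 0$, contradicting topological nilpotence (from $(3) \Rightarrow (1)$) applied at $v$.

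The main obstacle is $(5) \Rightarrow (4)$, because Lemma~\ref{inverse-prop} a priori detects only non-invertibility, whereas $(5)$ demands non-injectivity. The route I would take is as follows: suppose $T$ is triangular with respect to $(\B, \leq)$ and some diagonal coefficient is nonzero, and let $v_0$ be the least such basis element, with diagonal coefficient $c$. By minimality $T$ is strictly triangular on the $T$-invariant subspace $W_{<v_0} = \langle u \in \B \mid u < v_0 \rangle$, so $(T - c \cdot 1)|_{W_{<v_0}}$ is triangular with constant diagonal $-c \neq 0$ and hence invertible on $W_{<v_0}$ by Lemma~\ref{inverse-prop}. Writing $T(v_0) = cv_0 + \beta$ with $\beta \in W_{<v_0}$, choose $w \in W_{<v_0}$ with $(T - c \cdot 1)(w) = -\beta$; then $v_0 + w$ is a nonzero vector in $\ker(T - c \cdot 1)$, contradicting $(5)$.
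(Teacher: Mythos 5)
Your proof is correct. Note, however, that the paper does not supply its own proof of this statement: it is quoted as Proposition~20 of the earlier reference \cite{ZM} and used as a prerequisite, so there is no in-paper argument to compare against. That said, your argument is a sound and reasonably natural reconstruction using the tools the paper does make available, chiefly Lemma~\ref{inverse-prop}.

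A few remarks on the substance. The cycle $(1)\Leftrightarrow(2)\Leftrightarrow(3)$ via the kernel filtration $V_i=\ker(T^i)$ and nested bases is the standard route, and your transfinite induction for $(3)\Rightarrow(2)$ is clean. For $(3)\Rightarrow(4)$ the minimality of $v$ is not actually needed, since the computation $T^n(v)=c^nv+\gamma_n$ with $\gamma_n\in\langle u<v\rangle$ works for any basis vector with nonzero diagonal coefficient; this is harmless but worth noticing. The genuinely delicate step is $(5)\Rightarrow(4)$, and your solution is the right one: Lemma~\ref{inverse-prop} alone only detects non-invertibility of $T-c\cdot1$, but by restricting to the $T$-invariant subspace $W_{<v_0}$ (where, by minimality of $v_0$, $T$ is strictly triangular) you get honest invertibility of $(T-c\cdot1)|_{W_{<v_0}}$, which lets you solve $(T-c\cdot1)(w)=-\beta$ and exhibit the eigenvector $v_0+w$. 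The degenerate case $W_{<v_0}=\{0\}$ (when $v_0$ is the least element of $\B$) is still covered, since then $\beta=0$ and $v_0$ itself lies in $\ker(T-c\cdot1)$.
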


We conclude this section with a basic fact about triangularizable sets.

\begin{lemma} \label{closure-lemma}
Let $k$ be a field, $\, V$ a nonzero $k$-vector space, $\, (\B, \leq)$ a partially ordered basis for $\, V$, $X \subseteq \End_k(V)$, and $\, \cl(X)$ the closure of $X$ in the function topology. Then $X$ is triangular with respect to $\B$ if and only if $\, \cl(X)$ is.
\end{lemma}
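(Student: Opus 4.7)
The plan is to prove the two directions separately. The implication ``if $\cl(X)$ is triangular with respect to $(\B, \leq)$ then $X$ is'' is immediate from $X \subseteq \cl(X)$ together with the fact that triangularity is a condition on each element of the set individually. So the real content is the converse, and the approach will be to show, for every $v \in \B$, that the set of transformations sending $v$ into $\langle \{u \in \B \mid u \leq v\} \rangle$ is closed in the function topology; triangularity of $X$ will then pass to $\cl(X)$ by intersecting these closed sets over all $v \in \B$.

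More concretely, fix $v \in \B$ and write $W_v = \langle \{u \in \B \mid u \leq v\} \rangle$. I would set
\[
U_v = \{ S \in \End_k(V) \mid S(v) \in W_v \},
\]
and argue that $U_v$ is closed by showing that its complement is open. Indeed, if $T \in \End_k(V) \setminus U_v$, then $T(v) \notin W_v$, and the basic open neighborhood $\{S \in \End_k(V) \mid S(v) = T(v)\}$ of $T$ (coming from the one-dimensional subspace $\langle v \rangle$) is entirely contained in the complement of $U_v$, since every $S$ in it satisfies $S(v) = T(v) \notin W_v$. Hence $U_v$ is closed.

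Now, to say that $X$ is triangular with respect to $(\B, \leq)$ is precisely to say that $X \subseteq U_v$ for every $v \in \B$, i.e., $X \subseteq \bigcap_{v \in \B} U_v$. Since an intersection of closed sets is closed, we get $\cl(X) \subseteq \bigcap_{v \in \B} U_v$, which is precisely the statement that every element of $\cl(X)$ is triangular with respect to $(\B, \leq)$. This completes the nontrivial direction.

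There is really no hard step here; the only thing one has to be careful about is the definition of the function topology, which ensures that the basic neighborhoods $\{S \mid S|_W = T|_W\}$ for finite-dimensional $W$ (in particular for $W = \langle v \rangle$) are open, making the single-point ``evaluation at $v$'' constraint define a closed subset of $\End_k(V)$. Note that $W_v$ itself need not be finite-dimensional, but this is irrelevant: the argument only uses that $\{S \mid S(v) = T(v)\}$ is open for each individual $T$.
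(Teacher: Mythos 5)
Your proof is correct and uses essentially the same idea as the paper: the key observation in both is that the condition $S(v) \in \langle \{u \in \B \mid u \leq v\}\rangle$ depends only on the restriction $S|_{\langle v\rangle}$, which is controlled by the basic open sets of the function topology. The paper phrases this concretely (for $T \in \cl(X)$ and a fixed $v$, some $S \in X$ agrees with $T$ on $\langle v \rangle$, so $T(v) = S(v)$ lands where it should), whereas you phrase the dual statement (the set $U_v$ of transformations satisfying the condition at $v$ is closed, so $\cl(X) \subseteq \bigcap_v U_v$); these are two presentations of the same argument, and both are fine.
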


\begin{proof}
Suppose that $X$ is triangular with respect to $\B$. If $X = \emptyset$, then $\cl(X) = \emptyset$. So we may assume that $X \neq \emptyset$. Letting $T \in \cl(X)$ and $v \in \B$ be arbitrary, there is some $S \in X$ such that $T|_{\langle v\rangle} = S|_{\langle v\rangle}$, by properties of the function topology. Hence $$T(v) = S(v) \in \langle \{u \in \B \mid u \leq v\} \rangle.$$ Since $v \in \B$ was arbitrary, it follows that $T$ is triangular with respect to $\B$. Since $T \in \cl(X)$ was arbitrary, we conclude that $\cl(X)$ is triangular with respect to $\B$.

Conversely, if $\cl(X)$ is triangular with respect to $\B$, then the same holds for any  subset of $\cl(X)$, and for $X$ in particular.
\end{proof}

\section{Triangularization Lemma}

Let $k$ be a field, $V$ a $k$-vector space, $X \subseteq \End_k(V)$, and $W\subseteq V$ an $X$-invariant subspace (i.e., $X(W) \subseteq W$). Then we shall denote by $\overline{X}$ the subset of $\End_k(V/W)$ consisting of the transformations $\overline{T}$ defined by $\overline{T}(v+W)=T(v)+W$, for $T \in X$. (It is routine to check that any such $\overline{T}$ is well-defined and $k$-linear.) We make the next definition following Radjavi and Rosenthal~\cite{RR}.

\begin{definition} \label{quotient-def}
A property $P$ on sets of $k$-vector space transformations is \emph{inherited by quotients} if given any $k$-vector spaces $\, W\subseteq V$, and any $X \subseteq \End_k(V)$ such that $X(W) \subseteq W$ and $X$ has property $P$, then $\overline{X} \subseteq \End_k(V/W)$ also has property $P$.
\end{definition}

The following is an analogue of the Triangularization Lemma~\cite[Lemma 1]{RR}, proved for bounded linear operators on Banach spaces by Radjavi and Rosenthal. It will be our main tool for showing sets of transformations to be triangularizable. The (short) proof is essentially the same as that of~\cite[Theorem 15]{ZM} (which says that any finite commutative subset of $\End_k(V)$ consisting of triangularizable transformations is triangular with respect to a common well-ordered basis), but we give the details here for completeness.

\begin{lemma}\label{tri-lemma}
Let $k$ be a field, and let $P$ be a property on sets of $k$-vector space transformations that is inherited by quotients. Suppose that for every nonzero $k$-vector space $\, V$ and every subset $X$ of $\, \End_k(V)$ that satisfies property $P$ there exists a $\, 1$-dimensional $X$-invariant subspace of $\, V$. Then given any $k$-vector space $\, V$, every subset of $\, \End_k(V)$ that satisfies property $P$ is triangularizable.
\end{lemma}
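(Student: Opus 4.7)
The plan is to construct a well-ordered basis of $V$ by transfinite recursion, building up a chain of $X$-invariant subspaces one dimension at a time, and then reading off the basis from the successor steps.

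First I set $V_0 = \{0\}$. At a successor stage $\alpha+1$, assuming $V_\alpha$ has already been constructed as an $X$-invariant subspace with $V_\alpha \neq V$, I look at the quotient $V/V_\alpha$, which is nonzero, and the induced set $\overline{X} \subseteq \End_k(V/V_\alpha)$. Since $P$ is inherited by quotients, $\overline{X}$ still has property $P$, so the hypothesis supplies a $1$-dimensional $\overline{X}$-invariant subspace of $V/V_\alpha$, say $\langle v_\alpha + V_\alpha\rangle$ with $v_\alpha \in V \setminus V_\alpha$. I set $V_{\alpha+1} = V_\alpha + \langle v_\alpha\rangle$; this is $X$-invariant because $T(V_\alpha)\subseteq V_\alpha$ for all $T \in X$, and $T(v_\alpha) \in V_\alpha + \langle v_\alpha\rangle$ by the choice of $v_\alpha$. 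At a limit stage $\alpha$, I set $V_\alpha = \bigcup_{\beta<\alpha} V_\beta$, which is again an $X$-invariant subspace as a union of a chain of such.

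This recursion must terminate: the $V_\alpha$ are strictly increasing at each successor step, so there is a first ordinal $\lambda$ with $V_\lambda = V$ (bounded in size by, e.g., $\dim_k(V)^+$). Letting $\B = \{v_\alpha : \alpha < \lambda\}$, ordered by the ordering inherited from the ordinals, an easy transfinite induction shows that $\{v_\beta : \beta < \alpha\}$ is a basis for $V_\alpha$ at every stage (the successor case uses $v_\alpha \notin V_\alpha$, and the limit case uses that $V_\alpha$ is the union of the $V_\beta$). Thus $\B$ is a well-ordered basis of $V$. Finally, for any $T \in X$ and any $\alpha < \lambda$,
\[
T(v_\alpha) \in V_{\alpha+1} = \big\langle \{v_\beta : \beta \leq \alpha\}\big\rangle,
\]
so $T$ is triangular with respect to $(\B,\leq)$, and $X$ is triangularizable.

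The substantive inputs are the two hypotheses, and they are each used exactly once: the quotient-invariance of $P$ guarantees that the recursion step is legal (so that the hypothesis on $1$-dimensional invariant subspaces applies to $\overline{X}$, not merely to $X$), while the existence hypothesis produces the next basis vector. Everything else is bookkeeping. The only real subtlety is to be careful at limit ordinals: one needs to check both that $V_\alpha$ remains $X$-invariant (immediate from its definition as a union) and that the partial basis so far spans $V_\alpha$ (an equally immediate union argument). I do not expect a genuine obstacle beyond keeping the transfinite induction tidy.
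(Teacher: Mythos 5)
Your proof is correct and follows essentially the same transfinite recursion as the paper's own argument: build a strictly increasing chain of $X$-invariant subspaces $V_\alpha$ by passing to quotients and invoking the $1$-dimensional-invariant-subspace hypothesis at each successor step, take unions at limits, and read off the well-ordered basis from the vectors introduced at successor stages. The only cosmetic difference is your indexing scheme (labeling $v_\alpha$ by the ordinal $\alpha$ with $V_{\alpha+1}=V_\alpha+\langle v_\alpha\rangle$, versus the paper's labeling by successor ordinals), which is slightly tidier but not a substantively different route.
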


\begin{proof}
Let $V$ be a $k$-vector space and $X \subseteq \End_k(V)$ be such that $X$ satisfies property $P$. We shall show that $X$ is triangularizable. We may assume that $V \neq \{0\}$, since otherwise every subset of $\End_k(V)$ is triangularizable.

We begin by constructing recursively for each ordinal $\alpha$ an $X$-invariant subspace $V_\alpha \subseteq V$, and for each successor ordinal $\alpha$ a vector $v_\alpha \in V$. Set $V_0=\{0\}$. Now let $\alpha > 0$ be an ordinal and assume that $V_{\gamma}$ has been defined for every $\gamma < \alpha$. If $\alpha$ is a limit ordinal, then let $V_{\alpha} = \bigcup_{\gamma < \alpha} V_{\gamma}$. Since each $V_{\gamma}$ is assumed to be $X$-invariant, their union $V_{\alpha}$ will also be $X$-invariant. Next, if $\alpha$ is a successor ordinal, then let $\beta$ be its predecessor. By hypothesis, the set $\overline{X}$ of transformations on $V/V_{\beta}$ induced by $X$ satisfies property $P$. Thus, there is a $1$-dimensional $\overline{X}$-invariant subspace $W/V_{\beta}$ of $V/V_{\beta}$ (assuming that $V\neq V_{\beta}$). Let $v_{\alpha} \in V$ be such that $\{v_{\alpha}+V_{\beta}\}$ is a basis for $W/V_{\beta}$, and define $V_{\alpha} = \langle V_{\beta} \cup \{v_{\alpha}\}\rangle$. Then $V_{\alpha}$ must be $X$-invariant, because of the invariance of $V_{\beta}$ and $W/V_{\beta}$. We proceed in this fashion until $V = \bigcup_{\alpha \in \Lambda} V_{\alpha}$ for some ordinal $\Lambda$.

Now let $$\Gamma = \{\alpha \in \Lambda \mid \alpha \text{ is a successor ordinal}\},$$ and let $\B = \{v_{\alpha} \mid \alpha \in \Gamma\}$. As a subset of a well-ordered set, $\Gamma$ is itself well-ordered. Since we introduced new vectors only at successor steps in our construction, $$V = \bigcup_{\alpha \in \Gamma} V_{\alpha} =  \bigcup_{\alpha \in \Gamma} \langle \{v_{\gamma} \mid \gamma \leq \alpha, \gamma \in \Gamma\}\rangle,$$ and hence $V = \langle \B \rangle$. Since $V_{\alpha}/V_{\beta} = \langle v_{\alpha} +V_{\beta} \rangle$ is $1$-dimensional for all $\alpha \in \Gamma$ with predecessor $\beta$, we conclude that $\B$ is a basis for $V$. Also, since $V_{\alpha} = \langle \{v_{\gamma} \mid \gamma \leq \alpha, \gamma \in \Gamma\}\rangle$ is $X$-invariant for all $\alpha \in \Gamma$, it follows that $X(v_{\alpha}) \in \langle \{v_{\gamma} \mid \gamma \leq \alpha, \gamma \in \Gamma\} \rangle$ for all $\alpha \in \Gamma$. Thus,  $X$ is triangular with respect to $\B$, a basis for $V$ indexed by the well-ordered set $\Gamma$.
\end{proof}

Lemma~\ref{tri-lemma} gives the following alternative description of triangularizable sets, which additionally generalizes~\cite[Lemma 5]{ZM}.

\begin{proposition} \label{subspaces-prop}
Let $k$ be a field, $\, V$ a nonzero $k$-vector space, and $X \subseteq \End_k(V)$. Then $X$ is triangularizable if and only if there exists a well-ordered $\, ($by inclusion$)$ set of $X$-invariant subspaces of $\, V$, which is maximal as a well-ordered set of subspaces of $\, V$.
\end{proposition}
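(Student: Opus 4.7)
The plan is to prove both implications directly by translating back and forth between well-ordered bases and well-ordered chains of subspaces. The forward direction builds a chain from a triangularizing basis by taking spans of initial segments; the backward direction reverses this by picking out one basis vector at each successor step of the chain. The conceptual heart is that maximality of a well-ordered chain of subspaces forces strong structural constraints which mirror precisely the data of a well-ordered basis.

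For the forward direction, suppose $X$ is triangular with respect to a well-ordered basis $\B = \{v_\gamma \mid \gamma < \Lambda\}$. For each ordinal $\alpha \leq \Lambda$ I would set $V_\alpha = \langle \{v_\gamma \mid \gamma < \alpha\}\rangle$. Triangularity gives $X(V_\alpha) \subseteq V_\alpha$, so $\{V_\alpha\}_{\alpha \leq \Lambda}$ is a well-ordered (by inclusion) chain of $X$-invariant subspaces. Maximality as a chain of arbitrary subspaces then follows from three observations: $V_0 = \{0\}$ and $V_\Lambda = V$; at each successor $\alpha = \beta+1$ we have $\dim (V_\alpha/V_\beta) = 1$, leaving no room for an intermediate subspace; and at each limit $\alpha$ we have $V_\alpha = \bigcup_{\gamma < \alpha} V_\gamma$, so any strictly larger subspace must omit some $v_\gamma$ and hence cannot contain all $V_\gamma$.

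For the backward direction, let $\mathcal{C} = \{V_\alpha \mid \alpha \leq \Lambda\}$ be a maximal well-ordered chain of $X$-invariant subspaces. I would first observe that maximality (as a chain of arbitrary subspaces, not necessarily $X$-invariant) forces the same four properties listed above: $V_0 = \{0\}$, $V_\Lambda = V$, $\dim V_\alpha/V_\beta = 1$ at every successor, and $V_\alpha = \bigcup_{\gamma < \alpha} V_\gamma$ at every limit. At each successor $\alpha$ with predecessor $\beta$, pick a vector $v_\alpha \in V_\alpha \setminus V_\beta$. A transfinite induction then shows $V_\alpha = \langle \{v_\gamma \mid \gamma \leq \alpha,\ \gamma \text{ successor}\}\rangle$ at every $\alpha$ (the successor step is clear, and the limit step follows from $V_\alpha = \bigcup V_\gamma$). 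Consequently $\B = \{v_\alpha \mid \alpha \leq \Lambda,\ \alpha \text{ successor}\}$ spans $V$, and linear independence follows by looking at the largest index occurring in a dependence relation. Finally, $X$-invariance of each $V_\alpha$ yields $X(v_\alpha) \subseteq V_\alpha = \langle \{v_\gamma \mid \gamma \leq \alpha\}\rangle$, so $X$ is triangular with respect to $\B$ in its inherited well-order.

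The only step requiring real care is verifying that maximality of $\mathcal{C}$ forces the four structural properties; in particular, at a limit ordinal $\alpha$ one must check that $\bigcup_{\gamma < \alpha} V_\gamma$ could legitimately be inserted into the chain without disturbing the well-order, and at a successor one must check that a codimension-$\geq 2$ gap admits an intermediate subspace. Both are straightforward but are the load-bearing observations; the rest of the argument is bookkeeping, and I expect the whole proof to be shorter than the setup of Lemma~\ref{tri-lemma}. One could alternatively fit this into the framework of Lemma~\ref{tri-lemma} by taking property $P$ to be ``admits a maximal well-ordered chain of invariant subspaces,'' but direct construction seems cleaner here.
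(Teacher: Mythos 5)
Your forward direction matches the paper's almost verbatim: both pass to an ordinal-indexed basis, take spans of proper initial segments, and verify maximality by splitting on whether the least $\alpha$ with $W \subseteq V_\alpha$ is a limit or a successor. The backward direction, however, is genuinely different. The paper does not construct a basis directly; instead it observes that the least nonzero member of the chain is a $1$-dimensional $X$-invariant subspace, and then invokes Lemma~\ref{tri-lemma}, which forces it to verify that the property ``possesses a maximal well-ordered chain of invariant subspaces'' is inherited by quotients --- and that verification (checking that $\overline{U} = \{(V_\alpha + W)/W\}$ is a maximal well-ordered chain in $V/W$) is the bulk of the proof. You instead perform a direct transfinite construction: you extract the four structural consequences of maximality (the chain contains $\{0\}$ and $V$, has $1$-dimensional jumps at successors, and is continuous at limits), pick $v_\alpha \in V_\alpha \setminus V_\beta$ at each successor $\alpha = \beta+1$, and verify by transfinite induction that these vectors form a well-ordered basis with respect to which $X$ is triangular. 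This is correct, more self-contained (no appeal to Lemma~\ref{tri-lemma} and no inheritance-by-quotients check), and arguably the cleaner route for this particular proposition; what the paper's approach buys is uniformity with the rest of its arguments and the reusable fact that the maximal-chain property passes to quotients. One small point of care in your argument that you correctly flag: to use maximality you must confirm that inserting a comparable subspace into a well-ordered chain preserves well-ordering, which holds because only finitely many elements are being added (any descending sequence is eventually in the original chain).
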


\begin{proof}
Suppose that $X$ is triangularizable. Then there is a well-ordered set $(\Omega, \leq)$ and a basis $\B = \{v_{\alpha} \mid \alpha \in \Omega\}$ for $V$ such that $T(v_{\alpha}) \in \langle \{v_{\beta} \mid \beta \leq \alpha\} \rangle$ for all $\alpha \in \Omega$ and all $T \in X$. Since every well-ordered set is order-isomorphic to an ordinal, we may assume that $\Omega$ is an ordinal. For each $\alpha \in \Omega$ set $V_{\alpha} = \langle \{v_{\beta} \mid \beta < \alpha\} \rangle$, where $V_0$ is understood to be the zero space ($0$ being the least element of $\Omega$). Then for all $\alpha_1, \alpha_2 \in \Omega$ we have $V_{\alpha_1} \subseteq V_{\alpha_2}$ if and only if $\alpha_1 \leq \alpha_2$.  Since $(\Omega, \leq)$ is well-ordered, it follows that $U=\{V_{\alpha} \mid \alpha \in \Omega^+\}$ is well-ordered by set inclusion, where $\Omega^+ = \Omega \cup \{\Omega\}$ is the successor of $\Omega$ and $V = V_{\Omega}$. Moreover, $X(v_{\beta}) \in V_{\alpha}$ for all $\alpha, \beta \in \Omega$ satisfying $\beta < \alpha$, from which it follows that each element of $U$ is $X$-invariant. It remains to show that $U$ is maximal. First, note that for each $\alpha \in \Omega^+$ we have $\bigcup_{\beta<\alpha} V_\beta \subseteq V_\alpha$, with equality if $\alpha$ is a limit ordinal, and $V_{\alpha}/(\bigcup_{\beta<\alpha} V_{\beta})$ $1$-dimensional otherwise.

Now, let $W \subseteq V$ be a subspace, which we may assume to be nonzero, that is comparable under set inclusion to $V_{\alpha}$ for each $\alpha \in \Omega^+$. Since $\Omega^+$ is well-ordered, there is a least $\alpha \in \Omega^+$ such that $W\subseteq V_{\alpha}$. Since $W$ is comparable to each element of $U$, from the choice of $V_{\alpha}$ it follows that $V_{\beta} \subset W$ for all $\beta < \alpha$. If $\alpha$ is a limit ordinal, then $V_{\alpha} = \bigcup_{\beta<\alpha} V_{\beta} \subseteq W$, and hence $W=V_{\alpha} \in U$. Otherwise, there is a $\beta \in \Omega^+$ such that $\alpha$ is the successor of $\beta$, and $V_{\beta} \subset W \subseteq V_{\alpha}$. But in this case $V_{\alpha} /V_{\beta}$ is $1$-dimensional, and therefore $W=V_{\alpha} \in U$ once again. Thus $U$ is a maximal well-ordered set of subspaces of $V$.

Conversely, suppose that there exists an ordinal $\Omega$ and a set $U = \{V_{\alpha} \mid \alpha \in \Omega\}$ of $X$-invariant subspaces of $V$, such that $V_{\alpha_1} \subseteq V_{\alpha_2}$ if and only if $\alpha_1 \leq \alpha_2$ (for all $\alpha_1, \alpha_2 \in \Omega$), and $U$ is maximal as a well-ordered set of subspaces of $V$. Since $V \neq \{0\}$, the set $U$ contains at least one nonzero subspace of $V$. Then letting $\alpha \in \Omega$ be the least element such that $V_{\alpha} \neq \{0\}$, the maximality of $U$ implies that $V_{\alpha}$ is an $X$-invariant subspace that is necessarily $1$-dimensional. 

By  Lemma~\ref{tri-lemma}, to conclude that $X$ is triangularizable, it suffices to show that the property of having a maximal well-ordered set of invariant subspaces is inherited by quotients. To that end, keeping $U$ as before, and letting $W \subseteq V$ be an $X$-invariant subspace, we shall show that $\overline{U} = \{(V_{\alpha} + W)/W \mid \alpha \in \Omega\}$ is a maximal well-ordered set of $\overline{X}$-invariant subspaces of $V/W$. 

For each $\alpha \in \Omega$, it follows immediately from $V_{\alpha}$ and $W$ being $X$-invariant that $(V_\alpha + W)/W$ is $\overline{X}$-invariant. Next, let $\alpha, \beta \in \Omega$. If $V_{\alpha} \subset V_{\beta}$, then $(V_{\alpha}+W)/W \subseteq (V_{\beta}+W)/W$. Conversely, if $(V_{\alpha}+W)/W \subset (V_{\beta}+W)/W$, then $V_{\alpha}+W \subset V_{\beta}+W$, and hence $V_{\alpha} \subset V_{\beta}$ (since $U$ being well-ordered implies that either $V_{\alpha} \subseteq V_{\beta}$ or $V_{\beta} \subseteq V_{\alpha}$). Thus, upon removing any repeated terms from $\overline{U}$, sending $(V_{\alpha}+W)/W$ to $V_{\alpha}$ gives an order-embedding of $\overline{U}$ into $U$, from which it follows that $\overline{U}$ is well-ordered. 

Finally, to show that $\overline{U}$ is maximal, let $Y$ be a subspace of $V$ containing $W$, such that $Y/W$ is comparable to each element of $\overline{U}$. Then we can find a least $\alpha \in \Omega$ such that $Y/W \subseteq (V_{\alpha}+W)/W$. If $\alpha$ is a limit ordinal, then $V_{\alpha} = \bigcup_{\beta<\alpha} V_{\beta}$, by the maximality of $U$, and hence $$(V_{\alpha}+W)/W = \bigcup_{\beta<\alpha} (V_{\beta}+W)/W \subseteq Y/W$$ implies that $Y/W = (V_{\alpha}+W)/W$. Thus, let us assume that $\alpha$ is a successor ordinal, with predecessor $\beta$. Then $V_{\beta}+W \subset Y \subseteq V_{\alpha}+W$. Writing $Y' = Y \cap V_{\alpha}$, we have $V_{\beta} \subset Y' \subseteq V_{\alpha}$, since $V_{\beta} \subset V_{\alpha}$. Hence $Y' = V_{\alpha}$, by the maximality of $U$, and therefore $Y/W = (V_{\alpha}+W)/W$ once again. Thus $\overline{U}$ is maximal as a well-ordered set of subspaces of $V/W$.
\end{proof}

We conclude this section with a list of properties that are inherited by quotients, which will be useful later. These are all standard or easy to prove.

\begin{lemma} \label{pass-to-quotients}
Let $k$ be a field, $\, V$ a $k$-vector space, $X \subseteq \End_k(V)$, and $\, W$ an $X$-invariant subspace of $\, V$. For each of the following properties, if $X$ satisfies it, then so does $\overline{X} \subseteq \End_k(V/W)$. 
\begin{enumerate}
\item[$(1)$] The set is closed under addition.
\item[$(2)$] The set is closed under multiplication $($i.e., composition of transformations$)$.
\item[$(3)$] The set is closed under scalar multiplication.
\item[$(4)$] The set is topologically nilpotent.
\end{enumerate}
\end{lemma}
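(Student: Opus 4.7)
The plan is to observe that each of the four properties is preserved under the surjection $X \to \overline{X}$, $T \mapsto \overline{T}$, by exploiting that this map respects the relevant algebraic operations: a routine check gives $\overline{S+T} = \overline{S} + \overline{T}$, $\overline{ST} = \overline{S}\,\overline{T}$, and $\overline{aT} = a\overline{T}$ for all $S,T \in \End_k(V)$ and $a \in k$. From these identities, parts (1), (2), and (3) follow essentially for free: given $\overline{S}, \overline{T} \in \overline{X}$ (with $S, T \in X$) and $a \in k$, the transformations $S+T$, $ST$, and $aT$ lie in $X$ by hypothesis, and their images under $T \mapsto \overline{T}$ give the needed closure of $\overline{X}$ under addition, multiplication, and scalar multiplication.

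For part (4), the main new ingredient is that the function topology involves finite-dimensional ``test subspaces,'' but these can be lifted. First I would unpack the definition: $\overline{X}$ is topologically nilpotent precisely when, for every sequence $\overline{T_1}, \overline{T_2}, \ldots \in \overline{X}$ and every finite-dimensional subspace $\overline{U} \subseteq V/W$, there is an $n$ with $\overline{T_n} \cdots \overline{T_1}(\overline{U}) = 0$. Given such data, choose $T_i \in X$ with image $\overline{T_i}$, pick a (finite) basis $\overline{u_1}, \dots, \overline{u_m}$ of $\overline{U}$, lift each $\overline{u_j}$ to some $u_j \in V$, and set $U = \langle u_1, \dots, u_m \rangle \subseteq V$. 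Then $U$ is finite-dimensional and $(U + W)/W = \overline{U}$.

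By topological nilpotence of $X$, there exists $n$ with $T_n \cdots T_1(U) = 0$. Using $\overline{T_n} \cdots \overline{T_1} = \overline{T_n \cdots T_1}$, we get $\overline{T_n} \cdots \overline{T_1}(u_j + W) = T_n \cdots T_1(u_j) + W = W$ for each $j$, hence $\overline{T_n} \cdots \overline{T_1}(\overline{U}) = 0$, as required.

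The only step that requires any thought is (4), and even there the only subtlety is the lifting of a finite-dimensional subspace of $V/W$ to a finite-dimensional subspace of $V$, which is immediate because finite-dimensional subspaces have bases that can be lifted elementwise. The rest reduces to the homomorphism property of $T \mapsto \overline{T}$, so I do not anticipate any real obstacle.
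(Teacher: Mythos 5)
Your proof is correct and follows essentially the same route as the paper: establish that $T \mapsto \overline{T}$ respects addition, composition, and scalar multiplication (from which (1)--(3) are immediate), and for (4) lift a finite-dimensional test subspace of $V/W$ to a finite-dimensional subspace of $V$, apply topological nilpotence of $X$, and push the conclusion back down through the identity $\overline{T_n}\cdots\overline{T_1}=\overline{T_n\cdots T_1}$. The paper phrases the lift slightly differently (taking $U \supseteq W$ with $U/W$ finite-dimensional and then choosing finite-dimensional $Y$ with $U+W=Y+W$, rather than lifting a basis), but this is the same idea.
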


\begin{proof}
We may assume that $X \neq \emptyset$, since otherwise the conditions above are vacuously true for both $X$ and $\overline{X}$. Let $T,S \in X$. Then for all $v \in V$ we have $$(\overline{T}+\overline{S})(v +W) = \overline{T}(v +W) + \overline{S}(v +W) = (T(v)+W) + (S(v)+W)$$ $$= (T(v) + S(v))+W = (T+S)(v) + W = (\overline{T+S})(v+W),$$ showing that $\overline{T}+\overline{S} = \overline{T+S}$. Analogous computations also show that $\overline{T} \cdot \overline{S} = \overline{T \cdot S}$ and $\overline{aT} = a\overline{T}$ for all $a\in k$. From this it follows immediately that if $X$ is closed under addition, multiplication, or scalar multiplication, then so is $\overline{X}$.

Finally, suppose that $X$ is topologically nilpotent, let $T_1, T_2, T_3, \cdots \in X$, and let $U$ be a subspace of $V$ containing $W$, such that $U/W$ is finite-dimensional. Also, let $Y$ be a finite-dimensional subspace of $V$ such that $U+W = Y+W$. Then $T_n\cdots T_2T_1(Y) = 0$ for some $n \in \Z^+$. Hence $$\overline{T_n}\cdots \overline{T_2}\cdot \overline{T_1}(U/W) = \overline{T_n\cdots T_2T_1}(U/W) = T_n\cdots T_2T_1(Y) +W = W,$$ from which it follows that $\overline{X}$ is topologically nilpotent.
\end{proof}

\section{Topologically Nilpotent Sets}

Using the triangularization lemma of the previous section, we can give an alternative characterization an arbitrary strictly triangularizable subset of $\End_k(V)$, which generalizes the equivalence of (1) and (3) in Proposition~\ref{top-nilpt-prop}.

\begin{theorem} \label{strictly-upper}
Let $k$ be a field, $\, V$ a nonzero $k$-vector space, and $X \subseteq \End_k(V)$. Then $X$ is strictly triangularizable if and only if $X$ is topologically nilpotent.
\end{theorem}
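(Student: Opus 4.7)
The plan is to handle the two directions separately. For the forward direction, suppose $X$ is strictly triangular with respect to a well-ordered basis $(\B, \leq)$, and let $T_1, T_2, \ldots \in X$ together with a finite-dimensional subspace $W \subseteq V$ be given. Pick a finite set $U_0 \subseteq \B$ with $W \subseteq \langle U_0 \rangle$, and let $u^{(0)} = \max U_0$. Strict triangularity gives $T_1(u) \in \langle\{u' \in \B : u' < u^{(0)}\}\rangle$ for every $u \in U_0$, so $T_1(W)$ lies in this subspace; being finite-dimensional, $T_1(W)$ is contained in $\langle U_1 \rangle$ for some finite $U_1 \subseteq \B$ whose elements are all strictly below $u^{(0)}$. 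Iterating, whenever $T_n \cdots T_1(W) \neq 0$ we obtain $u^{(n)} = \max U_n < u^{(n-1)}$; since $\B$ admits no infinite descending chain, this forces $T_n \cdots T_1(W) = 0$ for some $n$, establishing topological nilpotence.

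For the converse, I plan to apply Lemma~\ref{tri-lemma} with $P$ the property ``topologically nilpotent.'' This property is inherited by quotients by Lemma~\ref{pass-to-quotients}(4), so the hypothesis of Lemma~\ref{tri-lemma} reduces to showing that every topologically nilpotent $X \subseteq \End_k(V)$ with $V \neq 0$ admits a $1$-dimensional $X$-invariant subspace. Once Lemma~\ref{tri-lemma} yields triangularizability with respect to some well-ordered basis $(\B, \leq)$, the upgrade to \emph{strict} triangularizability will be automatic: topological nilpotence of $X$ forces each $T \in X$ to be topologically nilpotent (take the constant sequence $T_i = T$ in the definition), and Proposition~\ref{top-nilpt-prop}(4) then guarantees that such a $T$ is strictly triangular with respect to any well-ordered basis with respect to which it is triangular.

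The crux is therefore producing the $1$-dimensional invariant subspace, and in fact I aim for the stronger statement that some nonzero $v \in V$ satisfies $T(v) = 0$ for every $T \in X$, so that $\langle v \rangle$ serves as the required invariant line. Suppose no such $v$ exists. Choose any $v_0 \in V \setminus \{0\}$; by assumption some $T_0 \in X$ has $T_0(v_0) \neq 0$, so set $v_1 = T_0(v_0)$, and continue recursively, picking $T_i \in X$ with $T_i(v_i) \neq 0$ and letting $v_{i+1} = T_i(v_i) = T_i T_{i-1} \cdots T_0(v_0)$. This yields an infinite sequence of nonzero vectors, but applying the topological nilpotence of $X$ to the finite-dimensional subspace $\langle v_0 \rangle$ with the list $T_0, T_1, T_2, \ldots$ produces an $n$ with $T_n \cdots T_0(v_0) = 0$, contradicting $v_{n+1} \neq 0$. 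The main obstacle is exactly this recursion; its success depends on exploiting the topological nilpotence of the whole set $X$ on $\langle v_0 \rangle$, rather than merely the nilpotence of individual elements.
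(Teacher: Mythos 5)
Your proof is correct and takes essentially the same approach as the paper in both directions: the forward direction exploits strict triangularity to force a strictly descending chain of basis elements bounding the support, contradicting well-ordering (the paper runs this argument one basis vector $v \in \B$ at a time, while you run it on a finite spanning set for $W$ all at once, but the idea is identical); the converse reduces via Lemma~\ref{tri-lemma} and Lemma~\ref{pass-to-quotients}(4) to exhibiting a nonzero common kernel vector, obtained by the same recursive construction, with the upgrade to strictness via Proposition~\ref{top-nilpt-prop}(4).
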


\begin{proof}
We may assume that $X \neq \emptyset$, since otherwise $X$ is both strictly triangularizable and topologically nilpotent, by the definitions of those terms.

Suppose that $X$ is strictly triangular with respect to a well-ordered basis $(\B, \leq)$ for $V$, let $T_1, T_2, T_3, \ldots \in X$, and let $v \in \B$. Seeking a contradiction, suppose also that $T_n \cdots T_2T_1(v) \neq 0$ for all $n \in \Z^+$. Since $T_1$ is strictly triangular with respect to $\B$, we have $T_1(v) = \sum_{w \leq u_1} a_ww$ for some $a_w \in k$ and $u_1, w \in \B$, where $u_1 < v$ and $a_{u_1} \neq 0$. Similarly, $$T_2T_1(v) = \sum_{w \leq u_1} a_wT_2(w) = \sum_{w' \leq u_2} b_{w'}w'$$ for some $b_{w'} \in k$ and $u_2, w' \in \B$, where $u_2 < u_1$ and $b_{u_2} \neq 0$. Continuing in this fashion, for each $n \in \Z^+$ we can write $T_n \cdots T_2T_1(v) = \sum_{w \leq u_n} c_ww$  for some $c_w \in k$ and $u_n, w \in \B$, where $$u_n < \cdots < u_2 < u_1$$ and $c_{u_n} \neq 0$. Thus we obtain an infinite descending chain $$\cdots < u_3 < u_2 < u_1$$ of elements of $\B$, which contradicts the hypothesis that $\B$ is well-ordered. Therefore it must be the case that $T_n \cdots T_2T_1(v) = 0$ for some $n \in \Z^+$. Since $v \in \B$ was arbitrary and the $T_i$ are linear, it follows that for every finite-dimensional subspace $W$ of $V$ there exists $n \in \Z^+$ such that $T_n \cdots T_2T_1(W) = 0$. Since the transformations $T_i \in X$ were arbitrary, this shows that $X$ is topologically nilpotent.

Conversely, suppose that $X$ is topologically nilpotent. It suffices to show that $X$ is triangularizable, since by hypothesis every element of $X$ is topologically nilpotent, and hence strictly triangular with respect to any well-ordered basis for $V$ with respect to which it is triangular, by Proposition~\ref{top-nilpt-prop}. Since, according to Lemma~\ref{pass-to-quotients}(4), the property of being topologically nilpotent is inherited by quotients, by Lemma~\ref{tri-lemma} it in turn suffices to show that there is a $1$-dimensional $X$-invariant subspace of $V$. To find such a $1$-dimensional subspace, we shall show that  $\bigcap_{T \in X} \ker(T) \neq \{0\}$. For, every ($1$-dimensional) subspace of $\bigcap_{T \in X} \ker(T)$ is necessarily $X$-invariant.

Seeking a contradiction, suppose that $\bigcap_{T \in X} \ker(T) = \{0\}$. Then necessarily $X \neq \{0\}$, since we have assumed that $V$ is nonzero. Hence we can find $T_1 \in X$ and $v \in V$ such that $T_1(v) \neq 0$. Again, since $\bigcap_{T \in X} \ker(T) = \{0\}$, we can find $T_2 \in X$ such that $T_2T_1(v) \neq 0$. Continuing in this fashion, we obtain $T_1, T_2, T_3, \ldots \in X$ such that $T_n\cdots T_2T_1(v) \neq 0$ for all $n \in \Z^+$, which contradicts $X$ being topologically nilpotent. Hence $\bigcap_{T \in X} \ker(T) \neq \{0\}$, as desired.
\end{proof}

Next we note that Theorem~\ref{strictly-upper} generalizes~\cite[Theorem 2]{Chebotar}. The algebra $R$ in this statement is necessarily nonunital, unless it is zero.

\begin{corollary}[Chebotar] \label{Cheb-thrm}
Let $k$ be a field, $\, V$ a nonzero $k$-vector space, and $R$ a finite-dimensional nilpotent $k$-subalgebra of $\, \End_k(V)$. Then $R$ is strictly triangularizable.
\end{corollary}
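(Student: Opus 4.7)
The plan is to deduce the corollary from Theorem~\ref{strictly-upper} by showing that $R$ is topologically nilpotent. Recall that an associative algebra $R$ is nilpotent precisely when $R^n = \{0\}$ for some $n \in \Z^+$; equivalently, every product of $n$ elements of $R$ equals zero. This interpretation is consistent with the remark in the excerpt that $R$ must be nonunital unless it is zero, since a nilpotent algebra cannot contain a multiplicative identity.

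Granted this, topological nilpotence of $R$ is essentially immediate. Let $T_1, T_2, T_3, \ldots \in R$ be any sequence of transformations. For each $i \geq n$ the composition $T_i T_{i-1} \cdots T_2 T_1$ lies in $R^i \subseteq R^n = \{0\}$, so the tail of the sequence $(T_i \cdots T_2 T_1)_{i=1}^\infty$ is identically zero. In particular the sequence converges to $0$ in the function topology (and does so uniformly in a very strong sense), so $R$ is topologically nilpotent. Theorem~\ref{strictly-upper} then gives that $R$ is strictly triangularizable.

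Finite-dimensionality of $R$ plays no explicit role in the argument above, once one adopts the standard definition of a nilpotent algebra; it is presumably present in the hypothesis because Chebotar's original proof made use of it, or in order to accommodate the weaker convention under which ``nilpotent'' means only ``nil'' (each element individually nilpotent as a transformation). In the latter case, one first invokes the well-known fact that a finite-dimensional nil associative algebra over a field is nilpotent in the stronger sense---for instance by noting that such an $R$ is contained in the Jacobson radical of the finite-dimensional unital algebra $k\cdot 1 + R \subseteq \End_k(V)$, which is itself nilpotent---and then the preceding paragraph applies.

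There is no substantive obstacle: once Theorem~\ref{strictly-upper} is in hand, the corollary is essentially a one-line observation, and the only mild subtlety is the reconciliation of possible meanings of ``nilpotent'' in the finite-dimensional setting.
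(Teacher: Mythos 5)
Your proof is correct and takes essentially the same approach as the paper: the paper's proof simply observes that a nilpotent algebra is trivially topologically nilpotent and then invokes Theorem~\ref{strictly-upper}. Your extra discussion of the possible meanings of ``nilpotent'' and the role of finite-dimensionality is a reasonable aside but not needed for the argument.
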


\begin{proof}
Since $R$ is nilpotent, it is trivially topologically nilpotent, and hence strictly triangularizable, by Theorem~\ref{strictly-upper}.
\end{proof}

Another immediate consequence of Theorem~\ref{strictly-upper} is a well-known result from~\cite{Levitzki}.

\begin{corollary}[Levitzki]
Let $k$ be a field, $n \in \Z^+$, and $X$ a nilpotent multiplicative subsemigroup of $\, \M_n(k)$. Then $X$ is triangularizable.
\end{corollary}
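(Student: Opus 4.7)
The plan is to deduce this from Theorem~\ref{strictly-upper}. Recall that $X$ being a \emph{nilpotent} multiplicative semigroup means that there is some $N \in \Z^+$ such that every product $T_N \cdots T_2 T_1$ of $N$ elements of $X$ is zero. The first step is to verify that such an $X$ is topologically nilpotent as a subset of $\M_n(k) = \End_k(k^n)$. Observe that, because $k^n$ is finite-dimensional, the function topology on $\M_n(k)$ is discrete (as already noted in Section~\ref{top-sect}), so convergence of a sequence to $0$ in this topology amounts to the sequence being eventually equal to $0$.

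Given this, the verification is essentially immediate: for any infinite sequence $T_1, T_2, T_3, \ldots \in X$, the uniform nilpotence hypothesis forces $T_n \cdots T_2 T_1 = 0$ for all $n \geq N$, so the partial products are eventually $0$ and hence converge to $0$ in the function topology. Thus $X$ is topologically nilpotent, and Theorem~\ref{strictly-upper} then yields that $X$ is strictly triangularizable, which implies in particular that $X$ is triangularizable. I do not expect any genuine obstacle here; the only interpretive point that requires care is that ``nilpotent semigroup'' is being taken in the standard Levitzki sense of the existence of a uniform bound $N$ (not merely that every individual element of $X$ be nilpotent), after which the result follows in a line or two from the previously established theorem.
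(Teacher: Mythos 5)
Your proof is correct and is precisely the argument the paper intends; the paper gives no explicit proof, calling the corollary an ``immediate consequence'' of Theorem~\ref{strictly-upper}, and your verification that uniform nilpotence implies topological nilpotence (trivially, since the partial products vanish from index $N$ on) followed by the application of that theorem is exactly the intended route.
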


We conclude this section with one more application of the theorem.

\begin{corollary} \label{nilp-alg-cor}
Let $k$ be a field, $\, V$ a nonzero $k$-vector space, $X \subseteq \End_k(V)$, and $R$ the nonunital $k$-subalgebra of $\, \End_k(V)$ generated by $X$. If $X$ is topologically nilpotent, then so is $R$.
\end{corollary}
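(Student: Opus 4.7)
The plan is to bootstrap Theorem~\ref{strictly-upper} twice, using the observation that the strictly triangular transformations with respect to a fixed well-ordered basis form a nonunital $k$-subalgebra of $\End_k(V)$.

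First, since $X$ is topologically nilpotent, Theorem~\ref{strictly-upper} gives a well-ordered basis $(\B, \leq)$ for $V$ with respect to which $X$ is strictly triangular. I would then consider the set
\[
N = \{T \in \End_k(V) \mid T \text{ is strictly triangular with respect to } (\B, \leq)\}
\]
and verify that it is closed under addition, scalar multiplication, and composition. The first two are immediate from Definition~\ref{tri-def}. For composition, if $T,S \in N$ and $v \in \B$, then $S(v) = \sum_{u < v} a_u u$ for some $a_u \in k$, so
\[
TS(v) \,=\, \sum_{u < v} a_u\, T(u) \,\in\, \langle \{w \in \B \mid w < v\} \rangle,
\]
since for each such $u$ we have $T(u) \in \langle \{w \in \B \mid w < u\} \rangle \subseteq \langle \{w \in \B \mid w < v\} \rangle$. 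Thus $N$ is a nonunital $k$-subalgebra of $\End_k(V)$ containing $X$, and hence $R \subseteq N$.

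It follows that $R$ is strictly triangular with respect to $(\B, \leq)$, so in particular $R$ is strictly triangularizable, and a second application of Theorem~\ref{strictly-upper} concludes that $R$ is topologically nilpotent. There is no real obstacle here; the only nontrivial point is the closure of $N$ under composition, which the well-ordering makes immediate.
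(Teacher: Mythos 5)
Your proof is correct and follows essentially the same route as the paper's: both invoke Theorem~\ref{strictly-upper} to fix a well-ordered basis with respect to which $X$ is strictly triangular, observe that the strictly triangular transformations with respect to that basis are closed under sums, scalar multiples, and products (you package this as a subalgebra $N$, the paper verifies it directly on a typical element $\sum a_i S_{i,1}\cdots S_{i,m_i}$ of $R$), and then apply the theorem once more to conclude that $R$ is topologically nilpotent. The reorganization is cosmetic; the mathematical content is identical.
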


\begin{proof}
We may assume that $X \neq \emptyset$, since otherwise $R=\{0\}$. Supposing that $X$ is topologically nilpotent, by Theorem~\ref{strictly-upper}, there is a well-ordered basis $(\B, \leq)$ for $V$ with respect to which $X$ is strictly triangular. Now let $T \in R$ be any element. Then $T$ can be expressed as $T=\sum_{i=1}^n a_iS_{i,1}\cdots S_{i,m_i}$ for some $a_i \in k$, $S_{i,j} \in X$, and $n, m_i \in \Z^+$. Since each $S_{i,j}$ is strictly triangular with respect to $\B$, the same is true of the products $S_{i,1}\cdots S_{i,m_i}$, and hence also of $T$. Thus $R$ is strictly triangular with respect to $\B$, and therefore topologically nilpotent, by Theorem~\ref{strictly-upper}.
\end{proof}

\section{Radicals} \label{rad-sect}

The remainder of the paper is concerned with triangularizable subrings of $\End_k(V)$. The main goal of this section is to show that the Jacobson radical $\rad(R)$ of such a subring $R$ of $\End_k(V)$ is topologically nilpotent. We make the following definition to facilitate the discussion.

\begin{definition}
Let $k$ be a field, $\, V$ a nonzero $k$-vector space, and $R$ a subring of $\, \End_k(V)$. Set
$$\tnil(R) = \{T \in R \mid T \text{ is topologically nilpotent}\}.$$
\end{definition}

The next lemma is an analogue of the standard fact that every nil left (or right) ideal of a ring is contained in its Jacobson radical (see, e.g.,~\cite[Lemma 4.11]{Lam}).

\begin{lemma} \label{nil-rad-lem}
Let $k$ be a field, $V$ a nonzero $k$-vector space, $R$ a subring of $\, \End_k(V)$ that is closed in the function topology, and $J$ a left $\, ($or right$)$ ideal of $R$. If $J \subseteq \tnil(R)$, then $J \subseteq \rad (R)$.
\end{lemma}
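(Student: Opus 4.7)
The plan is to use the standard characterization of the Jacobson radical: for a unital ring $R$, an element $j$ lies in $\rad(R)$ if and only if $1-rj$ is left-invertible in $R$ for every $r \in R$ (and the symmetric statement for right ideals uses $jr$ in place of $rj$). So, given $j \in J$ and $r \in R$, the goal is to construct an inverse of $1-rj$ that actually lies in $R$.

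Since $J$ is a left ideal, $rj \in J \subseteq \tnil(R)$, so $rj$ is topologically nilpotent as an element of $\End_k(V)$. By the equivalence $(1)\!\Leftrightarrow\!(2)$ of Proposition~\ref{top-nilpt-prop}, $V = \bigcup_{i \geq 1} \ker((rj)^i)$, so for each $v \in V$ there exists $N(v)$ with $(rj)^n(v) = 0$ for all $n \geq N(v)$. Hence the partial sums
$$s_N \;=\; 1 + rj + (rj)^2 + \cdots + (rj)^N$$
stabilize pointwise on $V$ and therefore converge in the function topology to a transformation $s \in \End_k(V)$ defined by $s(v) = \sum_{i=0}^{\infty} (rj)^i(v)$. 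A direct computation gives $(1-rj)s_N = s_N(1-rj) = 1 - (rj)^{N+1}$; since $(rj)^{N+1} \to 0$ and left/right multiplication by the fixed element $1-rj$ is continuous in the function topology, passing to the limit yields $(1-rj)s = s(1-rj) = 1$. Thus $s$ is a two-sided inverse of $1-rj$ in $\End_k(V)$.

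The hypothesis that $R$ is closed is now invoked exactly once: each $s_N$ lies in $R$ (because $R$ is a unital subring containing $rj$), and $s = \lim s_N$ belongs to the closure of $R$, which equals $R$. Consequently $1-rj$ is a unit in $R$. Since $r \in R$ was arbitrary, the characterization above gives $j \in \rad(R)$. The right-ideal case is identical, using $jr \in J$ in place of $rj$ and the dual characterization in terms of right-invertibility of $1-jr$.

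I expect no real obstacle here: the argument is a direct transcription of the classical proof for nil ideals, with the nilpotent geometric series replaced by a function-topology-convergent geometric series. The only subtle point is keeping track of where continuity and closedness are used—continuity of multiplication justifies passing to the limit in $(1-rj)s_N = 1 - (rj)^{N+1}$, and closedness of $R$ is what places the limit $s$ back inside $R$.
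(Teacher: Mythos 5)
Your proposal is correct and follows essentially the same route as the paper's proof: form the geometric series $\sum_{i\geq 0}(rj)^i$, observe it converges pointwise because $rj$ is topologically nilpotent, use closedness of $R$ to place the limit inside $R$, and conclude $1-rj$ is a unit. The only cosmetic difference is that the paper verifies the inverse identity directly on each vector $v$ rather than appealing to continuity of multiplication by the fixed element $1-rj$; both justifications are valid.
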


\begin{proof}
We shall only treat the case where $J$ is a left ideal, since the right ideal version is entirely analogous.

Let $T \in J$, and let $S \in R$. We shall show that $1 - ST$ is invertible in $R$. Since $S\in R$ is arbitrary, this implies that $T \in \rad(R)$ (see, e.g.,~\cite[Lemma 4.1]{Lam}), and hence $J \subseteq \rad (R)$.

To show that $1 - ST$ is invertible, we first note that $ST \in J$, and hence $ST$ is topologically nilpotent, assuming that $J \subseteq \tnil(R)$. Thus, for any $v \in V$ there exists $n\in \Z^+$ such that $(ST)^i(v) = 0$ for all $i \geq n$. It follows that $\sum_{i=0}^\infty (ST)^i$ converges to transformation in the function topology on $\End_k(V)$. Since $\sum_{i=0}^m (ST)^i \in R$ for all $m \in \Z^+$, since every open neighborhood of $\sum_{i=0}^\infty (ST)^i$ contains such a finite sum, and since $R$ is assumed to be closed, we have $\sum_{i=0}^\infty (ST)^i \in R$. 

Now, for any $v \in V$, we can find $n\in \Z^+$ such that $(ST)^n(v) = 0 = (ST)^n(1 - ST)(v)$. Then $$(1 - ST)\sum_{i=0}^\infty (ST)^i(v) = (1 - ST)\sum_{i=0}^{n-1} (ST)^i(v) = (1 - (ST)^{n})(v) = v$$ and $$\sum_{i=0}^\infty (ST)^i(1 - ST)(v) = \sum_{i=0}^{n-1} (ST)^i(1 - ST)(v) = (1 - (ST)^{n})(v) = v.$$ Hence $(1-ST)^{-1} = \sum_{i=0}^\infty (ST)^i \in R$.
\end{proof}

The next example shows the necessity of assuming that $R$ is closed in this lemma.

\begin{example}
Let $k$ be a field, and let $V$ be a $k$-vector space with basis $\B = \{v_i \mid i \in \Z^+\}$. Define $T \in \End_k(V)$ by 
$$T(v_i) = 
\left\{ \begin{array}{ll}
v_{i-1} & \text{if } \, i > 1\\
0 & \text{if } \, i = 1
\end{array}\right.,$$
and extend linearly to all of $V$. Also, let $R$ be the $k$-subalgebra of $\End_k(V)$ generated by $T$. It is easy to see that $p(T) \neq 0$ for any nonzero polynomial $p(x) \in k[x]$. (If $p(x) = \sum_{i=0}^n a_i x^i$ for some $n \in \Z^+$ and $a_i \in k$, then $0 = p(T)(v_{n+1}) = \sum_{i=0}^n a_i v_{n+1-i}$ would imply that $a_0=\cdots = a_n=0$, since $\{v_1, v_2, \dots, v_{n+1}\}$ is linearly independent.) Hence $R \cong k[x]$ as $k$-algebras, via a map that sends $T$ to $x$, and in particular $\rad(R)=0$. Now let $J$ be the ideal of $R$ generated by $T$. Then for all $\sum_{i=1}^n a_i T^i \in J$ and for all $m \in \Z^+$ we have $(\sum_{i=1}^n a_i T^i)^{m}(v_m) = 0$. Hence $J \subseteq \tnil(R)$, but $J \not\subseteq \rad(R)$. (In fact, $J = \tnil(R)$.)

We conclude by noting that $R$ is not closed in the function topology. This follows from the fact that $\sum_{i=0}^\infty T^i$ converges to a transformation in $\End_k(V)$ (by the same reasoning as in the proof of Lemma~\ref{nil-rad-lem}), and every open neighborhood of $\sum_{i=0}^\infty T^i$ contains an element of $R$ (of the form $\sum_{i=0}^n T^i$), but $\sum_{i=0}^\infty T^i \notin R$.
\hfill $\Box$
\end{example}

If a subring $R$ of $\End_k(V)$ is triangularizable, then we can say much more about $\tnil(R)$ and its relationship with $\rad (R)$ than we did in Lemma~\ref{nil-rad-lem}.

\begin{proposition} \label{nil-rad-prop}
Let $k$ be a field, $V$ a nonzero $k$-vector space, and $R$ a subring of $\, \End_k(V)$ triangular with respect to a well-ordered basis $\, (\B, \leq)$ for $\, V$. Then the following hold.
\begin{enumerate}
\item[$(1)$] $\, \tnil(R)$ is the ideal consisting precisely of the transformations in $R$ that are strictly triangular with respect to $\, (\B, \leq)$.
\item[$(2)$] $\, \tnil(R)$ is topologically nilpotent.
\item[$(3)$] $\, \rad (R) \subseteq \tnil(R)$.
\end{enumerate}  
Moreover, if $R$ is closed in the function topology, then $\, \tnil(R) = \rad (R)$.
\end{proposition}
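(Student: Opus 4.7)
The idea is to route everything through Proposition~\ref{top-nilpt-prop}, which tells us that a triangular transformation is topologically nilpotent precisely when it is strictly triangular with respect to any well-ordered basis witnessing its triangularity.

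First I would handle (1) by observing that for any $T \in R$, Proposition~\ref{top-nilpt-prop}(4) applied to the given basis $(\B,\leq)$ yields
\[
\tnil(R) = \{T \in R \mid T \text{ is strictly triangular with respect to } (\B,\leq)\}.
\]
That this is an additive subgroup closed under scalar multiplication is immediate from the definition. For the ideal property, I would verify directly that if $S \in R$ is triangular and $T \in R$ is strictly triangular, both with respect to $(\B,\leq)$, then $ST$ and $TS$ are strictly triangular. This is the standard ``upper-triangular times strictly-upper-triangular is strictly-upper-triangular'' calculation performed on an arbitrary basis vector $v$: $T(v) \in \langle\{u < v\}\rangle$ and $S$ triangular imply $ST(v) \in \langle\{u < v\}\rangle$; conversely $S(v) = \sum_{u \leq v} a_u u$ so $TS(v) = \sum_{u \leq v} a_u T(u) \in \langle \{u < v\}\rangle$, since each $T(u) \in \langle\{u' < u\}\rangle \subseteq \langle\{u' < v\}\rangle$ when $u < v$, and $T(v) \in \langle\{u < v\}\rangle$.

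For (2), part (1) exhibits $\tnil(R)$ as a set of transformations that are all strictly triangular with respect to the same well-ordered basis $(\B,\leq)$, so $\tnil(R)$ is strictly triangularizable; Theorem~\ref{strictly-upper} then gives topological nilpotence.

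The substantive step is (3), which will be the main obstacle. Let $T \in \rad(R)$; I want $T$ to be strictly triangular with respect to $(\B,\leq)$. Suppose not: then some $v \in \B$ satisfies $\pi_v T \pi_v \neq 0$, i.e.\ $T(v) = a v + w$ with $a \in k^\times$ and $w \in \langle\{u \in \B \mid u < v\}\rangle$. Consider $a\cdot 1 - T \in R$. Applied to $v$ it gives $-w$, which lies in $\langle\{u < v\}\rangle$, and on any $u < v$ it still outputs something in $\langle\{u' \leq u\}\rangle \subseteq \langle\{u' < v\}\rangle$. So with respect to $(\B,\leq)$ the transformation $a\cdot 1 - T$ is triangular with $\pi_v(a\cdot 1 - T)\pi_v = 0$, and Lemma~\ref{inverse-prop} shows it is not invertible in $\End_k(V)$, hence not in $R$. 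But $a\cdot 1 - T = a(1 - a^{-1}T)$ and $T \in \rad(R)$ forces $1 - a^{-1}T$ invertible in $R$, a contradiction. Thus $T$ is strictly triangular, so topologically nilpotent by Proposition~\ref{top-nilpt-prop}, giving $\rad(R) \subseteq \tnil(R)$.

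For the ``moreover'' clause, suppose $R$ is closed in the function topology. By (1), $\tnil(R)$ is an ideal of $R$ contained in $\tnil(R)$, so Lemma~\ref{nil-rad-lem} yields $\tnil(R) \subseteq \rad(R)$; combined with (3) this gives equality.
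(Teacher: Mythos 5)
Your proof is correct and follows essentially the same route as the paper's: part (1) via Proposition~\ref{top-nilpt-prop} and the standard ``triangular times strictly triangular'' calculation, part (2) via Theorem~\ref{strictly-upper}, part (3) via Lemma~\ref{inverse-prop} applied to a scalar perturbation of $T$ (the paper works directly with $1-a^{-1}T$ rather than $a\cdot 1 - T$, but these differ only by the unit $a$), and the ``moreover'' clause via Lemma~\ref{nil-rad-lem}. One small remark: both your argument and the paper's implicitly use that $a^{-1}\cdot 1\in R$ (so that $1 - a^{-1}T$ is a product of $T$ with an element of $R$, hence invertible by $T\in\rad(R)$); this is automatic once $R$ is a $k$-subalgebra, which is how the proposition is applied everywhere else in the paper even though the hypothesis here says only ``subring.''
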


\begin{proof}
(1) It follows from Proposition~\ref{top-nilpt-prop} that $\tnil(R)$ consists precisely of the transformations in $R$ that are strictly triangular with respect to $(\B, \leq)$, and in particular, $\tnil(R) \neq \emptyset$. To show that this set is an ideal of $R$, let $T \in \tnil(R)$ and $S_1, S_2 \in R$. Since $T$ is strictly triangular with respect to $\B$, for any $v \in \B$ we have $$S_1TS_2(v) \in S_1T(\langle \{u \in \B \mid u \leq v\} \rangle) \subseteq S_1(\langle \{u \in \B \mid u < v\} \rangle) \subseteq \langle \{u \in \B \mid u < v\} \rangle,$$ and therefore $S_1TS_2$ is also strictly triangular with respect to $\B$. Since the sum of any two transformations in $R$ that are strictly triangular with respect to $\B$ is also strictly triangular with respect to $\B$, we see that $\tnil(R)$ is an ideal of $R$.

(2) Since $\tnil(R)$ is strictly triangularizable, by (1), Theorem~\ref{strictly-upper} implies that it is topologically nilpotent.

(3) By (1), it suffices to show that every element of $\rad (R)$ is strictly triangular with respect to $\B$. Seeking a contradiction, suppose that $T \in \rad (R)$ is not strictly triangular with respect to $\B$. Thus $\pi_vT(v) = av$ for some $v \in \B$ and $a \in k \setminus \{0\}$, where $\pi_v \in \End_k(V)$ is the projection onto $\langle v \rangle$ with kernel $\langle \B \setminus \{v\} \rangle$. Hence $$\pi_v(1-a^{-1}T)(v) = \pi_v(v) - a^{-1}\pi_vT(v)=v-v=0,$$ and so $\pi_v(1-a^{-1}T)\pi_v = 0$. By Lemma~\ref{inverse-prop}, this shows that $1-a^{-1}T$ is not invertible (in $\End_k(V)$, and hence also in $R$), contradicting $T \in \rad (R)$. Thus every element of $\rad (R)$ is strictly triangular with respect to $\B$, and therefore $\rad (R) \subseteq \tnil(R)$.

Finally, if $R$ is closed, then $\tnil(R) \subseteq \rad (R)$, by Lemma~\ref{nil-rad-lem}, since $\tnil(R)$ is an ideal of $R$, by (1). Hence, in this case $\tnil(R) = \rad(R)$, by (3).
\end{proof}

The next example shows that $\tnil(R)$ need not be topologically nilpotent in general.

\begin{example} \label{lower-tri-eg}
Let $k$ be a field, let $V$ be a countably infinite-dimensional $k$-vector space, and identify $\End_k(V)$ with the set of column-finite infinite matrices (indexed by the positive integers). Also, let $X \subseteq \End_k(V)$ be the set of all strictly lower-triangular matrices with only finitely many nonzero entries. Thus an arbitrary element of $X$ has the following form.
$$\left(\begin{array}{cccccc}
0 & 0 & 0 & 0 & 0 & \cdots \\
* & 0 & 0 & 0 & 0 & \cdots \\
* & * & 0 & 0 & 0 & \cdots \\
\vdots & \vdots & \vdots & \ddots & \ddots & \ddots \\
* & * & \cdots & * & 0 & \cdots \\
0 & 0 & \cdots & 0 & 0 & \cdots \\
\vdots & \vdots & \vdots & \vdots & \vdots & \ddots \\
\end{array}\right)$$
It is easy to see that $X$ is a nonunital $k$-subalgebra of $\End_k(V)$, where every element is nilpotent. Thus, in particular, each element of $X$ is strictly triangularizable, by Proposition~\ref{top-nilpt-prop}. Letting $R$ be the unital $k$-subalgebra of $\End_k(V)$ generated by $X$, we have $X = \tnil(R)$, and this set is clearly an ideal of $R$.

Next we wish to show that $X = \tnil(R)$ is not topologically nilpotent. For all $i,j \in \Z^+$ let $E_{i,j}$ be the matrix unit with a $1$ in the $i$-th row and $j$-th column, and zeros elsewhere. Then $E_{i, i-1} \in \tnil(R)$ for all $i \geq 2$, and it is easy to see that $E_{n, n-1}\cdots E_{3,2}E_{2,1}= E_{n,1}$ for all $n \geq 2$. Hence, letting
$$v = \left(\begin{array}{c}
1 \\
0 \\
0 \\
\vdots \\
\end{array}\right),$$ 
we have $E_{n, n-1}\cdots E_{3,2}E_{2,1}(v) = E_{n,1}(v) \neq 0$ for all $n \geq 2$, and therefore $\tnil(R)$ is not topologically nilpotent. 

It follows from Proposition~\ref{nil-rad-prop} that $R$ is not triangularizable (as we have defined the term). This can also be shown directly, by noting that there is no $1$-dimensional $R$-invariant subspace of $V$. (If $R$ were triangular with respect to a well-ordered basis $(\B, \leq)$ for $V$, then the subspace spanned by the least element of $\B$ would necessarily be $R$-invariant.) For suppose, seeking a contradiction, that there exists $v \in V \setminus \{0\}$ such that $R(\langle v \rangle) \subseteq \langle v \rangle$. We can write 
$$v = \left(\begin{array}{c}
a_1 \\
a_2 \\
\vdots \\
a_n \\
0 \\
\vdots \\
\end{array}\right),$$
where $a_1, \dots, a_n \in k$ and $a_n \neq 0$. Then $E_{n+1,n}(v)$ has $a_n$ in the $(n+1)$-st coordinate and zeros elsewhere. Thus $E_{n+1,n} \in R$, but $E_{n+1,n}(v) \notin \langle v \rangle$, giving the desired contradiction. 

We conclude by noting that $X = \rad(R)$. Since $R/X \cong k$ as $k$-algebras, $X$ is a maximal left ideal, and hence $\rad(R) \subseteq X$. Also, for any $T \in X$ and any $S \in R$, 
$$1-ST = \left(\begin{array}{cc}
M & 0 \\
0 & \mathbf{1} \\
\end{array}\right),$$ 
where $M$ is a lower-triangular (finite) matrix with $1$s on the main diagonal, and $\mathbf{1}$ is an infinite matrix having $1$s on the main diagonal and $0$s elsewhere. Since $M$ is invertible, by finite-dimensional linear algebra, so is $1-ST$, and hence $T \in \rad(R)$. It follows that $\rad(R) = X = \tnil(R)$. \hfill $\Box$
\end{example}

\section{Triangularizable Algebras} \label{tri-alg-sect}

This section is devoted to describing the triangularizable subalgebras of $\End_k(V)$. We begin with a technical lemma.

\begin{lemma} \label{idempt-lemma}
Let $k$ be a field, $V$ a nonzero $k$-vector space, $R$ a $k$-subalgebra of $\, \End_k(V)$, and $I$ a topologically nilpotent ideal of $R$. Suppose that there exists $\, \{E_{\alpha} \mid \alpha \in \Omega\} \subseteq R$ such that the following conditions are satisfied. 
\begin{enumerate}
\item[$($a$)$] $E_{\alpha}E_{\alpha} - E_{\alpha} \in I$ for all $\alpha \in \Omega$, and $E_{\alpha}E_{\beta} \in I$ whenever $\alpha \neq \beta$. 
\item[$($b$)$] For every $T \in R$ and $v \in V$, there exist $\alpha_1, \dots, \alpha_n \in \Omega$, $a_{\alpha_1}, \dots, a_{\alpha_n} \in k$, and $S \in I$ such that $T(v) =\sum_{i=1}^n a_{\alpha_i} E_{\alpha_i}(v) +S(v)$. 
\end{enumerate}
Then $R$ is triangularizable.
\end{lemma}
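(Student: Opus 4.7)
The plan is to invoke Lemma~\ref{tri-lemma} with the property $P$: ``$X$ is a $k$-subalgebra of $\End_k(V)$ admitting a topologically nilpotent ideal $I$ and a family $\{E_\alpha\}_{\alpha \in \Omega}$ satisfying (a) and (b).'' Two things need to be verified: $P$ is inherited by quotients, and every $R$ satisfying $P$ on a nonzero $V$ admits a $1$-dimensional invariant subspace.

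For quotient inheritance, given an $R$-invariant subspace $W \subseteq V$, I would equip $\overline{R} \subseteq \End_k(V/W)$ with the ideal $\overline{I} = \{\overline{S} \mid S \in I\}$ and the family $\{\overline{E_\alpha}\}$. Topological nilpotence of $\overline{I}$ is Lemma~\ref{pass-to-quotients}(4); condition (a) descends because $T \mapsto \overline{T}$ is a ring homomorphism; and condition (b) descends by reducing the representation $T(v) = \sum a_{\alpha_i} E_{\alpha_i}(v) + S(v)$ modulo $W$.

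The substantive step is producing the $1$-dimensional invariant subspace, and my strategy is to study $K := \bigcap_{S \in I} \ker S$. Since $I$ is topologically nilpotent, Theorem~\ref{strictly-upper} shows it is strictly triangularizable, and the minimum vector in any strictly triangularizing well-ordered basis of $V$ lies in $K$, so $K \neq \{0\}$. Moreover $K$ is $R$-invariant because $I$ is a two-sided ideal: for $w \in K$, $T \in R$, $S \in I$ one has $S(T(w)) = (ST)(w) = 0$. On $K$, relation (a) forces the $E_\alpha|_K$ to be honest, pairwise orthogonal idempotents, and (b) simplifies to $T(w) = \sum a_{\alpha_i} E_{\alpha_i}(w)$ for $T \in R$ and $w \in K$ (the term $S(w)$ vanishes). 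Applying this to the identity $T = 1 \in R$ and any nonzero $w \in K$ forces $u := E_\alpha(w)$ to be nonzero for some $\alpha$. Then for arbitrary $T \in R$, in the expansion $T(u) = \sum b_{\beta_j} E_{\beta_j}(u) + S'(u)$, the term $S'(u) = (S'E_\alpha)(w) = 0$ since $S'E_\alpha \in I$, each summand with $\beta_j \neq \alpha$ vanishes because $E_{\beta_j} E_\alpha \in I$ kills $w$, and each summand with $\beta_j = \alpha$ reduces to $u$ because $(E_\alpha^2 - E_\alpha)(w) = 0$. Hence $T(u) \in \langle u \rangle$, giving the desired invariant line.

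The main obstacle is orchestrating this final collapse to a scalar action on $\langle u \rangle$; the two key ingredients that make it possible are the topological nilpotence of $I$ (ensuring $K \neq \{0\}$ via Theorem~\ref{strictly-upper}) and the unital assumption $1 \in R$ (ensuring that some $E_\alpha(w) \neq 0$ via condition (b) applied to $T=1$). All other reductions are routine algebraic consequences of (a), and Lemma~\ref{tri-lemma} then yields triangularizability of $R$.
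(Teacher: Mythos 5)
Your proposal is correct and follows essentially the same route as the paper: both reduce to Lemma~\ref{tri-lemma}, verify quotient-heritability via Lemma~\ref{pass-to-quotients}, use topological nilpotence of $I$ (through Theorem~\ref{strictly-upper}) to show $\bigcap_{S\in I}\ker S \neq \{0\}$, note this kernel is $R$-invariant, use condition (b) with $T=1$ to produce a nonzero $u = E_\alpha(w)$, and then use (a) and (b) together with $I$ killing the kernel to show $\langle u\rangle$ is $R$-invariant. The only cosmetic difference is that you locate a nonzero vector of the kernel by looking at the least basis vector of a strictly triangularizing basis, whereas the paper invokes Proposition~\ref{subspaces-prop} to produce a $1$-dimensional $I$-invariant subspace and observes $I$ must annihilate it; these amount to the same thing.
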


\begin{proof}
Since $I$ is topologically nilpotent, it is (strictly) triangularizable, by Theorem~\ref{strictly-upper}. Hence there exists a $1$-dimensional $I$-invariant subspace $W \subseteq V$ (e.g., by Proposition~\ref{subspaces-prop}), where necessarily $I(W) = \{0\}$. It follows that $U = \bigcap_{T \in I} \ker(T) \neq \{0\}$. Moreover, for all $T\in R$ we have $T(U) \subseteq U$, since $ST \in I$ for all $S \in I$, and hence $ST(U) = \{0\}$. 

We also note that $E_{\alpha}(U) \neq \{0\}$ for some $\alpha \in \Omega$. For if it were the case that $E_{\alpha}(U) = \{0\}$ for all $\alpha \in \Omega$, then the fact that $I(U) = \{0\}$ and condition (b) would imply that $R(U) = \{0\}$. Since $R$ is assumed to be unital, this would mean that $U=\{0\}$, contrary to construction. Thus, we may choose $u \in U \setminus \{0\}$ and $\beta \in \Omega$ such that $v := E_{\beta}(u) \neq 0$. We claim that $\langle v \rangle$ is $R$-invariant. 

Since $I(U) = \{0\}$, the first relation in condition (a) gives $0=E_{\beta}E_{\beta}(u) - E_{\beta}(u)$, and hence $E_{\beta}(v)=v$. Likewise, the second relation in condition (a) gives $0=E_{\alpha}E_{\beta}(u) = E_{\alpha}(v)$ for all $\alpha \in \Omega \setminus \{\beta\}$. Moreover, $I(v) = 0$, since $v = E_{\beta}(u) \in U$, as noted in the first paragraph. Now let $T \in R$, and let $\alpha_1, \dots, \alpha_n \in \Omega$, $a_{\alpha_1}, \dots, a_{\alpha_n} \in k$, and $S \in I$ be such that $T(v) = \sum_{i=1}^n a_{\alpha_i} E_{\alpha_i}(v) +S(v)$, using condition (b). Then $T(v) = a_{\beta}E_{\beta}(v) =a_{\beta}v$ for some $a_{\beta} \in k$ (where $a_{\beta}=0$ if $\beta \notin \{\alpha_1, \dots, \alpha_n\}$), showing that the $1$-dimensional space $\langle v \rangle$ is $R$-invariant.

Finally, it follows from Lemma~\ref{pass-to-quotients} that the property of being a $k$-algebra with a topologically nilpotent ideal is inherited by quotients (see Definition~\ref{quotient-def}). Moreover, it is easy to see that the property of having a subset satisfying conditions (a) and (b), with respect to the relevant topologically nilpotent ideal, is also inherited by quotients. Hence, $R$ is triangularizable, by Lemma~\ref{tri-lemma}.
\end{proof}

In the results that follow we view subrings of $\End_k(V)$ as topological rings via the subspace topology induced by the function topology on $\End_k(V)$, and we view the product $k^{\Omega}$ as a topological ring via the product topology, where each component copy of $k$ is taken to be discrete. Also, see Section~\ref{top-sect} for a review of the quotient topology and isomorphisms of topological algebras.

\begin{proposition} \label{prod-tri-prop}
Let $k$ be a field, $V$ a nonzero $k$-vector space, and $R$ a $k$-subalgebra of $\, \End_k(V)$. Suppose there exist a topologically nilpotent ideal $I$ of $R$ and a set $\, \Omega$ such that $R/I \cong k^{\Omega}$ as topological $k$-algebras. Then $R$ is triangularizable, and $I = \tnil(R)$.
\end{proposition}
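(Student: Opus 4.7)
The plan is to apply Lemma~\ref{idempt-lemma}, which reduces the triangularizability claim to exhibiting a family $\{E_\alpha\}_{\alpha \in \Omega} \subseteq R$ satisfying its conditions (a) and (b) with respect to the given ideal $I$. Let $\phi : R/I \to k^{\Omega}$ denote the given topological $k$-algebra isomorphism, let $\pi : R \to R/I$ denote the canonical projection, and for each $\alpha \in \Omega$ let $e_\alpha \in k^{\Omega}$ be the element with $e_\alpha(\beta) = \delta_{\alpha\beta}$. Choose any lift $E_\alpha \in R$ of $\phi^{-1}(e_\alpha)$ through $\pi$. Since $e_\alpha^2 = e_\alpha$ and $e_\alpha e_\beta = 0$ for $\alpha \neq \beta$ in $k^{\Omega}$, condition (a) is immediate: $E_\alpha^2 - E_\alpha \in I$ and $E_\alpha E_\beta \in I$.

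For condition (b), I would use the topology crucially. Fix $v \in V$ and set $K = \{S \in R : S(v) = 0\}$, which is a $k$-subspace of $R$ that is open in the function topology (as the intersection of $R$ with a basic open neighborhood of $0$ in $\End_k(V)$, obtained by taking $W = \langle v \rangle$). Since $\pi$ is open (being a quotient map of topological groups), $\phi(\pi(K))$ is an open $k$-subspace of $k^{\Omega}$, and therefore contains a basic open neighborhood $U_F := \{g \in k^{\Omega} : g|_F = 0\}$ of $0$ for some finite $F \subseteq \Omega$. The decomposition $g = \sum_{\alpha \in F} g(\alpha) e_\alpha + (g - \sum_{\alpha \in F} g(\alpha) e_\alpha)$ shows that $k^{\Omega} = \langle e_\alpha : \alpha \in F \rangle + U_F \subseteq \langle e_\alpha : \alpha \in F \rangle + \phi(\pi(K))$. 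Applying this to $g = \phi(\pi(T))$ for any $T \in R$ yields $T - \sum_{\alpha \in F} a_\alpha E_\alpha \in K + I$ for suitable $a_\alpha \in k$; evaluating at $v$ gives $T(v) = \sum_{\alpha \in F} a_\alpha E_\alpha(v) + S(v)$ for some $S \in I$, which is condition (b). Lemma~\ref{idempt-lemma} then gives triangularizability of $R$.

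For the equality $I = \tnil(R)$, the inclusion $I \subseteq \tnil(R)$ is immediate because each element of the topologically nilpotent set $I$ is itself topologically nilpotent. For the reverse, let $T \in \tnil(R)$, so $T^n \to 0$ in $R$; by continuity of $\phi \circ \pi$, the powers $(\phi \circ \pi)(T)^n$ converge to $0$ in $k^{\Omega}$. In the product topology with each factor $k$ discrete, this means that for every $\alpha \in \Omega$ one has $((\phi \circ \pi)(T)(\alpha))^n = 0$ for all sufficiently large $n$, forcing $(\phi \circ \pi)(T)(\alpha) = 0$ because $k$ is a field. Hence $(\phi \circ \pi)(T) = 0$, so $T \in I$. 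The main subtlety in the proof is the extraction of the finite set $F$ in condition (b): this uses the full strength of the hypothesis that $R/I \cong k^{\Omega}$ as \emph{topological} algebras (together with openness of $\pi$), rather than merely as algebras, and is the only place where the product topology on $k^{\Omega}$ enters in an essential way.
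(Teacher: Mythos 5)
Your proof is correct and follows essentially the same path as the paper's: both proofs reduce to Lemma~\ref{idempt-lemma}, lift the standard idempotents of $k^{\Omega}$ to obtain the family $\{E_{\alpha}\}$, verify condition (a) trivially, use openness of the quotient map $\pi$ together with the density of $\mathrm{span}\{e_{\alpha}\}$ (equivalently, your decomposition $k^{\Omega}=\langle e_{\alpha}:\alpha\in F\rangle + U_F$) to obtain condition (b), and then establish $I=\tnil(R)$ by the same observation that $k^{\Omega}$ has no nonzero topologically nilpotent elements. The only cosmetic difference is that you phrase the condition-(b) step in terms of the open subspace $K=\{S\in R:S(v)=0\}$ pushed forward through $\phi\circ\pi$, whereas the paper works with an arbitrary open neighborhood of $T$; these are the same argument in different clothing.
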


\begin{proof}
Let $\pi : R \to R/I$ be the canonical projection. Then, by the definition of the quotient topology, $\pi$ is continuous. Moreover, it is a standard fact that $\pi$ is necessarily an open map. (For if $\mathcal{U} \subseteq R$ is an open set, then $\mathcal{U}+I = \bigcup_{i \in I} (\mathcal{U}+i)$, and each $\mathcal{U}+i$ is open in $R$, by the continuity of addition. Hence $\pi^{-1}(\pi (\mathcal{U}))=\mathcal{U}+I$ is open, and therefore so is $\pi (\mathcal{U})$, again by the definition of the quotient topology.)

Since $R/I \cong k^{\Omega}$ as topological $k$-algebras, there must exist $\{E_{\alpha} \mid \alpha \in \Omega\} \subseteq R$, such that $\{E_{\alpha} +I \mid \alpha \in \Omega\} \subseteq R/I$ is a set of orthogonal idempotents, and $R/I$ is the closure of the $k$-vector space spanned by this set. Now let $T \in R$ and let $\mathcal{U} \subseteq R$ be an open neighborhood of $T$. Then, by the previous paragraph, $\pi(\mathcal{U})$ is an open neighborhood of $\pi(T)$, and hence there exist $\alpha_1, \dots, \alpha_n \in \Omega$ and $a_{\alpha_1}, \dots, a_{\alpha_n} \in k$ such that $\sum_{i=1}^n a_{\alpha_i} E_{\alpha_i} +I \in \pi (\mathcal{U})$. It follows that $\sum_{i=1}^n a_{\alpha_i} E_{\alpha_i} +S \in \mathcal{U}$ for some $S \in I$. Hence every open neighborhood of $T \in R$ contains an element of the form  $\sum_{i=1}^n a_{\alpha_i} E_{\alpha_i} +S$. In particular, for all $v\in V$ there exist $\alpha_1, \dots, \alpha_n \in \Omega$, $a_{\alpha_1}, \dots, a_{\alpha_n} \in k$, and $S \in I$ such that $T(v) = \sum_{i=1}^n a_{\alpha_i} E_{\alpha_i}(v) +S(v)$. Thus $\{E_{\alpha} \mid \alpha \in \Omega\}$ satisfies the conditions (a) and (b) of Lemma~\ref{idempt-lemma}, and therefore $R$ is triangularizable.

Next, we note that $I \subseteq \tnil(R)$, since every element of $I$ is topologically nilpotent. To prove the opposite inclusion, let $T \in \tnil(R)$. Then for every open neighborhood $\mathcal{U} \subseteq R$ of $0$ there is some $n \in \Z^+$ such $T^n \in \mathcal{U}$. We claim that $T + I \in R/I$ enjoys the same property. Letting $\mathcal{U} + I \subseteq R/I$ be an open neighborhood of $I$, where $\mathcal{U} \subseteq R$, the continuity of the canonical projection $\pi : R \to R/I$ implies that $\mathcal{U} + I$ is an open neighborhood of $0$ in $R$. Hence there is some $n \in \Z^+$ such $T^n \in \mathcal{U} + I$, and therefore $$(T+I)^n = T^n + I \in \mathcal{U} + I \subseteq R/I.$$ But in $R/I \cong k^{\Omega}$ the only element $r$ having this property (that for every open neighborhood $\mathcal{U}$ of $0$ there is some $n \in \Z^+$ such $r^n \in \mathcal{U}$) is $0$. Thus $T + I = I$, and hence $T \in I$. It follows that $I = \tnil(R)$.
\end{proof}

We observe that Example~\ref{lower-tri-eg} shows that the conclusion of the previous proposition would cease to hold if we were to remove the hypothesis that $I$ is topologically nilpotent. In that example we constructed a subalgebra $R$ of $\End_k(V)$ such that $R/\rad(R) \cong k$ as $k$-algebras, but $R$ is not triangularizable and $\rad(R)$ is not topologically nilpotent. (Note that $R/\rad(R)$ and $k$ in Example~\ref{lower-tri-eg} are both discrete, and so the isomorphism between them is necessarily topological.)

We now turn to giving necessary conditions for subalgebras of $\End_k(V)$ to be triangularizable.

\begin{proposition} \label{hom-prop}
Let $k$ be a field, $\, V$ a nonzero $k$-vector space, and $R$ a $k$-subalgebra of $\, \End_k(V)$ triangular with respect to a well-ordered basis $\, (\B, \leq)$. Define $\phi: R \to k^\B$ by $\phi(T) = (a_v)_{v \in \B}$, where $a_v \in k$ is such that $\pi_vT(v) = a_vv$, and $\pi_v \in \End_k(V)$ denotes the projection onto $\, \langle v \rangle$ with kernel $\, \langle \B \setminus \{v\} \rangle$. Then the following hold.
\begin{enumerate}
\item[$(1)$] The map $\phi: R \to k^\B$ is a continuous $k$-algebra homomorphism.
\item[$(2)$] $\, \tnil(R) = \ker (\phi)$, and this ideal is closed in the induced topology on $R$.
\item[$(3)$] If $R$ is the $k$-subalgebra of $\, \End_k(V)$ consisting of all the transformations triangular with respect to $\, (\B, \leq)$, then $\phi$ is open and surjective.
\end{enumerate}
In particular, $R/\tnil(R)$ is commutative.
\end{proposition}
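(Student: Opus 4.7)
The overall plan rests on the observation that any $T \in R$ triangular with respect to $(\B, \leq)$ admits, for each $v \in \B$, a decomposition
\[
T(v) = a_v v + r_T(v), \qquad r_T(v) \in \langle\{u \in \B \mid u < v\}\rangle.
\]
The map $\phi$ simply extracts the coefficients $a_v$, and each of the four assertions reduces to tracking how this decomposition interacts with the relevant operations.

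For (1), additivity and $k$-linearity of $\phi$ are immediate from the decomposition, since remainders in $\langle\{u<v\}\rangle$ remain there under sums and scalar multiplication. For multiplicativity I would expand
\[
TS(v) = T(a_v^S v + r_S(v)) = a_v^S a_v^T v + a_v^S r_T(v) + T(r_S(v)),
\]
and note that the last two summands lie in $\langle\{u<v\}\rangle$: $r_T(v)$ does by definition, while $T(r_S(v))$ does because $T$, being triangular, maps $\langle\{u<v\}\rangle$ into itself. Hence the $v$-coordinate of $TS(v)$ is $a_v^T a_v^S$. For continuity, I would check that the preimage of a sub-basic open set $\{(a_v) \in k^\B \mid a_{v_0} = c\}$ equals
\[
\{T \in R \mid T(v_0) \in c v_0 + \langle\{u<v_0\}\rangle\},
\]
which is a union of standard basic open sets $\{T \in R \mid T(v_0) = c v_0 + w\}$ as $w$ ranges over $\langle\{u<v_0\}\rangle$, and so is open.

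For (2), $T$ lies in $\ker(\phi)$ iff $a_v = 0$ for every $v \in \B$, iff $T$ is strictly triangular with respect to $(\B, \leq)$, iff $T \in \tnil(R)$ by Proposition~\ref{nil-rad-prop}(1); closedness of $\ker(\phi)$ follows from $\phi$ being continuous and $\{0\}$ being closed in the Hausdorff space $k^\B$. For (3), surjectivity is witnessed by the diagonal lift $T_{(a_v)}(v) := a_v v$, which belongs to the full triangularizable algebra and satisfies $\phi(T_{(a_v)}) = (a_v)$. For openness, I would show that a standard basic open set
\[
U = \{T \in R \mid T(v_i) = y_i,\ i = 1, \dots, n\}
\]
(with the $v_i \in \B$ distinct, and each $y_i = c_i v_i + w_i$, $w_i \in \langle\{u<v_i\}\rangle$, as forced by triangularity) maps onto the basic open set $\{(a_v) \in k^\B \mid a_{v_i} = c_i,\ i=1,\dots,n\}$: the forward inclusion is immediate, while for the reverse I would modify the diagonal construction to set $T(v_i) = y_i$ at the prescribed basis vectors and $T(v) = a_v v$ elsewhere.

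I expect the main obstacle to be the openness claim in (3), since it is there that the hypothesis ``$R$ is the full triangularizable algebra'' is used essentially: one needs the freedom to assign arbitrary diagonal values on $\B \setminus \{v_1, \dots, v_n\}$ while matching prescribed values at $v_1, \dots, v_n$, and still land inside $R$. The final assertion that $R/\tnil(R)$ is commutative is then formal: by (1) and (2), $\phi$ induces an injective $k$-algebra homomorphism $R/\tnil(R) = R/\ker(\phi) \hookrightarrow k^\B$, and $k^\B$ is commutative.
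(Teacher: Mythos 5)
Your proposal is correct and follows essentially the same route as the paper: decompose $T(v)$ into its $v$-component and a remainder in $\langle\{u < v\}\rangle$, check multiplicativity by noting that the triangular $T$ preserves that lower span, prove continuity via preimages of (sub)basic open sets, identify $\ker(\phi)$ with the strictly triangular transformations, and establish openness of $\phi$ in case (3) by the same diagonal-extension construction. The only cosmetic differences are that you cite Proposition~\ref{nil-rad-prop}(1) where the paper invokes Proposition~\ref{top-nilpt-prop} (both yield the same characterization of $\tnil(R)$), and you phrase the continuity check at the sub-basis rather than basis level.
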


\begin{proof}
(1) Let $S,T \in R$ and $v\in \B$. Since $S$ is triangular with respect to $\B$, we can write $S(v) = b_vv + \sum_{u<v} b_uu$ for some $b_v,b_u \in k$ and $u \in \B$. Since $T$ is triangular with respect to $\B$, we have $$\pi_vTS(v) = \pi_vT(b_vv) + \pi_vT\bigg(\sum_{u<v} b_uu\bigg) = b_v\pi_vT(v) = a_vb_vv,$$ where $a_v \in k$ is defined by $\pi_vT(v) = a_vv$. It follows that $\phi(TS) = \phi(T)\phi(S)$. It is also straightforward to show that $\phi$ is $k$-linear, and therefore $\phi$ is a $k$-algebra homomorphism.

Now let $\mathcal{U} \subseteq k^{\B}$ be a basic open set. Thus there exist $v_1, \dots, v_n \in \B$ and $b_{v_1}, \dots, b_{v_n} \in k$ such that $$\mathcal{U} = \{(c_v)_{v\in \B} \mid c_{v_i}=b_{v_i} \text{ for } 1\leq i\leq n\}.$$ Then 
\begin{align*}
\phi^{-1}(\mathcal{U}) & = \{S \in R \mid \pi_{v_i}S(v_i) = b_{v_i}v_i  \text{ for } 1\leq i\leq n\}\\
& = \bigcap_{i=1}^n \bigcup_{w \in \langle \{u \in \B \mid u < v_i\}\rangle} \{S \in R \mid S(v_i) = b_{v_i}v_i +w\}
\end{align*}
is open in the induced topology on $R$, from which it follows that $\phi$ is continuous.

(2) For any $T \in R$ we have
\begin{eqnarray*}
\phi(T) = 0 & \Leftrightarrow & \pi_vT(v)=0  \text{ for all } v\in \B \\
& \Leftrightarrow & T(v) \in \langle \{u \in \B \mid u < v\}\rangle \text{ for all } v\in \B \\
& \Leftrightarrow & T \in \tnil(R) \ (\text{by Proposition}~\ref{top-nilpt-prop}),
\end{eqnarray*}
and hence $\tnil (R) = \ker (\phi)$. Since $\{0\}$ is closed in $k^\B$ (which, as a product of discrete spaces, is Hausdorff), $\ker (\phi) = \phi^{-1}(\{0\})$, and $\phi$ is continuous, we conclude that $\ker (\phi)$ is closed in the induced topology on $R$.

(3) Suppose that $R$ consists of all the transformations in $\End_k(V)$ triangular with respect to $\B$, and let $\mathcal{U} \subseteq R$ be an open subset. We shall show that $\phi(\mathcal{U})$ is open. 

We may assume that $$\mathcal{U} = \{S \in R \mid S(v_i) = w_i \text{ for } 1\leq i\leq n\}$$ for some $v_1, \dots, v_n \in \B$ and  $w_1, \dots, w_n \in V$, since it is easy to see that every open set in $R$ is a union of sets of this form. Write $\pi_{v_i}(w_i) = b_{v_i} v_i$ $(1\leq i\leq n)$ for some $b_{v_i} \in k$. Given a $c_v \in k$ for each $v \in \B$, such that $c_{v_i}=b_{v_i}$ for $1\leq i\leq n$, define $T \in \End_k(V)$ by $T(v_i) = w_i$ for $1\leq i\leq n$, and $T(v)=c_vv$ for all $v \in \B\setminus \{v_1, \dots, v_n\}$. Since $T$ agrees with certain elements of $R$ on $\{v_1, \dots, v_n \}$, and each element of $\B\setminus \{v_1, \dots, v_n\}$ is an eigenvector of $T$, it is triangular with respect to $\B$, and hence $T \in R$. Thus $T \in \U$ and $\phi(T) = (c_v)_{v\in \B}$. It follows that $$\phi(\mathcal{U}) = \{(c_v)_{v\in \B}  \in k^{\B} \mid c_{v_i}=b_{v_i} \text{ for } 1\leq i\leq n\},$$ which is open in $k^{\B}$. Hence $\phi$ is open, and it is surjective, by a simpler version of the same argument.

Finally, $R/\tnil(R)$ is isomorphic to a subalgebra of $k^{\B}$, by (1) and (2), and is hence commutative.
\end{proof}

Combining the previous two propositions gives our main result.

\begin{theorem} \label{main-theorem}
Let $k$ be a field, $\, V$ a nonzero $k$-vector space, and $R$ a $k$-subalgebra of $\, \End_k(V)$. Then the following are equivalent.
\begin{enumerate}
\item[$(1)$] $R$ is triangularizable.
\item[$(2)$] $R$ is contained in a $k$-subalgebra $A$ of $\, \End_k(V)$ such that $A/\rad(A) \cong k^{\Omega}$ as topological $k$-algebras for some set $\, \Omega$, and $\, \rad (A)$ is topologically nilpotent.
\end{enumerate}
Moreover, if $\,(2)$ holds and $R$ is closed in the function topology, then $\, \rad(R) = R \cap \rad (A)$.
\end{theorem}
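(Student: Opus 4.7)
The plan is to treat the two implications using the propositions developed in Sections~\ref{rad-sect} and~\ref{tri-alg-sect}, and then to deduce the moreover statement essentially for free from the observation that topological nilpotence of a transformation is intrinsic.

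For the direction $(2)\Rightarrow(1)$, I would apply Proposition~\ref{prod-tri-prop} directly to $A$, with the topologically nilpotent ideal $I = \rad(A)$: this yields that $A$ itself is triangularizable with respect to some well-ordered basis $(\B,\leq)$ for $V$, and since every subset of a triangularizable set (with respect to a fixed basis) is triangularizable, $R\subseteq A$ is triangularizable as well.

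For the direction $(1)\Rightarrow(2)$, suppose $R$ is triangular with respect to a well-ordered basis $(\B,\leq)$ for $V$, and let $A\subseteq\End_k(V)$ be the full $k$-subalgebra of transformations triangular with respect to $(\B,\leq)$; clearly $R\subseteq A$. The key step is to note that $A$ is closed in the function topology, which is immediate from Lemma~\ref{closure-lemma} (since $\cl(A)$ is also triangular with respect to $\B$, so lies in $A$). Consequently the moreover clause of Proposition~\ref{nil-rad-prop} applies to $A$ and gives $\rad(A)=\tnil(A)$, which is topologically nilpotent by part~(2) of that proposition. Now invoke Proposition~\ref{hom-prop} applied to $A$: its parts $(1)$–$(3)$ produce a continuous, open, surjective $k$-algebra homomorphism $\phi\colon A\to k^{\B}$ with $\ker(\phi)=\tnil(A)=\rad(A)$. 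Standard general topology (a continuous surjection that is also open factors through the quotient to give a homeomorphism) then yields $A/\rad(A)\cong k^{\B}$ as topological $k$-algebras. Taking $\Omega=\B$ finishes the implication.

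For the moreover statement, suppose $(2)$ holds and $R$ is closed in the function topology. Since $(2)\Rightarrow(1)$ has been established, $R$ is triangularizable, and the moreover clause of Proposition~\ref{nil-rad-prop} applied to $R$ (closed and triangularizable) gives $\rad(R)=\tnil(R)$. On the $A$ side, Proposition~\ref{prod-tri-prop} applied to $A$ (using the hypothesis in $(2)$) forces $\rad(A)=\tnil(A)$. Finally, topological nilpotence of a transformation $T\in\End_k(V)$ is an intrinsic property that does not depend on the ambient subring, so $\tnil(R)=R\cap\tnil(A)$, and combining these identities yields $\rad(R)=R\cap\rad(A)$.

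I do not expect a serious obstacle: the whole argument is essentially an assembly of Lemma~\ref{closure-lemma}, Proposition~\ref{nil-rad-prop}, Proposition~\ref{hom-prop}, and Proposition~\ref{prod-tri-prop}. The one point that deserves care is checking that the isomorphism $A/\rad(A)\cong k^{\B}$ is topological and not merely algebraic; this is exactly where the openness of $\phi$ from Proposition~\ref{hom-prop}(3) is used, and where it would be easy to overlook the need for $A$ to be the \emph{full} algebra of transformations triangular with respect to $(\B,\leq)$ rather than $R$ itself.
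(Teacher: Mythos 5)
Your proposal is correct and follows essentially the same route as the paper: the $(1)\Rightarrow(2)$ direction uses Lemma~\ref{closure-lemma} to see that the full algebra $A$ of triangular transformations is closed, then Proposition~\ref{nil-rad-prop} and Proposition~\ref{hom-prop} to get $\rad(A)=\tnil(A)=\ker(\phi)$ topologically nilpotent with $A/\rad(A)\cong k^{\B}$ topologically, while $(2)\Rightarrow(1)$ and the moreover clause are handled exactly via Proposition~\ref{prod-tri-prop}, Proposition~\ref{nil-rad-prop}, and the intrinsic nature of $\tnil$. The point you flag about needing $A$ to be the full algebra (rather than $R$) so that $\phi$ is open is precisely the subtlety the paper addresses, illustrated by Example~\ref{non-iso-eg}.
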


\begin{proof}
Suppose that $R$ is triangular with respect to some well-ordered basis $(\B, \leq)$ for $V$. Let $A$ be the $k$-subalgebra of $\End_k(V)$ consisting of all the transformations triangular with respect to $(\B, \leq)$, and let $\phi : A \to k^\B$ be as in Proposition~\ref{hom-prop}. Then, by that proposition, $\phi$ is a surjective open continuous $k$-algebra homomorphism, such that $\ker(\phi) = \tnil(A)$. Hence $A/\tnil(A) \cong k^{\B}$ as topological $k$-algebras (see, e.g.,~\cite[Theorem 5.11]{Warner}). Now, by Lemma~\ref{closure-lemma}, the closure $\cl(A)$ of $A$ in the function topology on $\End_k(V)$ is triangular with respect to $\B$. Hence $\cl(A) \subseteq A$, and so $A$ is closed in $\End_k(V)$. Therefore, by Proposition~\ref{nil-rad-prop}, $\tnil(A)=\rad(A)$, and this ideal is topologically nilpotent, showing (2).

Now suppose that (2) holds. Then, by Proposition~\ref{prod-tri-prop}, $A$ is triangularizable, and hence so is $R$, as a subset of $A$, proving (1). Moreover, by the same proposition, $\rad(A)=\tnil(A)$, and if $R$ is closed, then also $\rad(R)=\tnil(R)$, by Proposition~\ref{nil-rad-prop}. Hence, in this situation, $$\rad(R) = \tnil (R) = R \cap \tnil (A) = R\cap \rad(A),$$ proving the final claim.
\end{proof}

The next example shows that for a closed triangularizable subalgebra $R$ of $\End_k(V)$ it is not necessarily the case that $R/\rad(R)\cong k^{\Omega}$ (as topological $k$-algebras) for some set $\Omega$.

\begin{example} \label{non-iso-eg}
Let $k$ be a countable field, let $\Omega = \Z^+ \cup \{\infty\}$, and let $V$ be a $k$-vector space with basis $\B = \{v_i \mid i \in \Omega\}$. Then $\B$ is well-ordered by the relation $\preceq$, where $v_i \preceq v_j$ if and only if either $i,j \in \Z^+$ and $i\leq j$, or $j = \infty$. For each $i \in \Z^+$ define $E_i \in \End_k(V)$ by
$$E_i(v_j) = 
\left\{ \begin{array}{ll}
v_i & \text{if } \, j \in \{i, \infty\}\\
0 & \text{otherwise}
\end{array}\right.,$$ and extend linearly to all of $V$. Then $\{E_i \mid i \in \Z^+\}$ is a set of orthogonal idempotents, which is clearly triangular with respect to $(\B, \preceq)$. It follows that the $k$-subalgebra $R$ of $\End_k(V)$ generated by this set is also triangular with respect to $(\B, \preceq)$. 

Every element of $R$ can be expressed in the form $a\cdot 1 + \sum_{i=1}^n a_iE_i$, for some $n \in \Z^+$ and $a, a_i \in k$. Notice that $$\bigg(a\cdot 1 + \sum_{i=1}^n a_iE_i\bigg)(v_{\infty}) = av_{\infty} + \sum_{i=1}^n a_iv_i,$$ and so each element of $R$ is completely determined by its action on $v_{\infty}$. This implies that $R$ is discrete in the topology induced on it by the function topology on $\End_k(V)$. The above computation also shows that if $a \neq 0$, then $(a\cdot 1 + \sum_{i=1}^n a_iE_i)^m(v_{\infty}) \neq 0$ for all $m \in \Z^+$. Moreover, if $a=0$ but $a_n \neq 0$, then $$\bigg(a\cdot 1 + \sum_{i=1}^n a_iE_i\bigg)^m(v_n) = a_n^mv_n \neq 0$$ for all $m \in \Z^+$. Thus $R$ has no nonzero topologically nilpotent elements, and hence $\rad(R) = 0$, by Proposition~\ref{nil-rad-prop}.

Define $\phi : R \to k^{\Omega}$ as in Proposition~\ref{hom-prop}. Specifically, $\phi(a\cdot 1 + \sum_{i=1}^n a_iE_i) = (b_i)_{i \in \Omega}$, where $b_i = a_i +a$ for $1 \leq i \leq n$ and $b_i=a$ for all $j > n$. Thus $\phi$ is an injective $k$-algebra homomorphism (which is also continuous, by Proposition~\ref{hom-prop}), with $\phi(R) = \langle k^{(\Z^+)} \cup \{1\} \rangle$, where $k^{(\Z^+)}$ is the direct sum of copies of $k$ indexed by the elements of $\Z^+$ (with each element of this subring of $k^{\Omega}$ understood to have $0$ in the $\infty$ coordinate). Therefore $$R/\rad(R) \cong R \cong \langle k^{(\Z^+)} \cup \{1\} \rangle$$ as $k$-algebras. In particular, $R/\rad(R) \not\cong k^{\Delta}$ for any set $\Delta$, since such a $\Delta$ would need to be infinite, making $k^{\Delta}$ uncountable, in contrast to $R$ (given that $k$ was assumed to be countable). 

Next, let us show that $R$ is closed. Let $\cl(R)$ denote the closure of $R$ in the function topology on $\End_k(V)$, and let $T \in \cl(R)$. Then there exist $n\in \Z^+$ and $a, a_1, \dots, a_n \in k$ such that $a\cdot 1 + \sum_{i =1}^n a_iE_i \in R$ agrees with $T$ on $v_{\infty}$. Since each element of $R$ is completely determined by its action on $v_{\infty}$, and since every neighborhood of $T$ must contain an element of $R$, it follows that $T = a\cdot 1 + \sum_{i =1}^n a_iE_i$, which belongs to $R$, and hence $R=\cl(R)$.

We also observe that $\phi$ is not open. Since $R$ is discrete, $\{E_1\} \subseteq R$ is open. On the other hand the intersection of any open subset of $k^{\Omega}$ with $\langle k^{(\Z^+)} \cup \{1\} \rangle$ is infinite, and hence $\phi(\{E_1\})$ cannot be open in the induced topology on $\langle k^{(\Z^+)} \cup \{1\} \rangle$.

Finally, let $A$ be the subalgebra of $\End_k(V)$ consisting of all the transformations triangular with respect to $(\B, \preceq)$, and extend $\phi$ to a map $\phi' : A \to k^{\Omega}$ (again, defined as in Proposition~\ref{hom-prop}). Then $\phi'$ is open and surjective, by Proposition~\ref{hom-prop}(3), but $\phi'$ is not closed, since it sends the closed set $R$ to $\langle k^{(\Z^+)} \cup \{1\} \rangle$, whose closure in $k^{\Omega}$ is $k^{\Omega}$. \hfill $\Box$
\end{example}

\section{Algebraically Closed Fields}

Next we wish to add another condition to Theorem~\ref{main-theorem}, in the case where the field is algebraically closed. This will require defining more topological terms and recalling an earlier result.

\begin{definition}
Let $k$ be a field and $R$ a topological $k$-algebra. Then the ring $R$ is called \emph{pseudocompact} if the topology on $R$ is complete, is Hausdorff, and has a basis of neighborhoods of $\, 0$ consisting of ideals $I$ such that $R/I$ has finite length both on the left and on the right $\, ($i.e., $R/I$ is a two-sided artinian ring$)$. We also say that $R$ is $k$-\emph{pseudocompact} if it is pseudocompact and every open ideal of $R$ has finite $k$-codimension.
\end{definition}

\begin{proposition}[Part of Proposition 3.15 in~\cite{IMR}]\label{WA-fin-dim}
Let $k$ be a field and $R$ a topological $k$-algebra. Then the following are equivalent.
\begin{enumerate}
\item[$(1)$]  $R$ is $k$-pseudocompact and $\, \rad(R)=\{0\}$.
\item[$(2)$]  $R \cong \prod_{\alpha \in \Omega} \M_{n_{\alpha}}(D_{\alpha})$ as topological $k$-algebras, for some set $\, \Omega$, $n_{\alpha} \in \Z^+$, and $D_{\alpha}$ finite-dimensional division $k$-algebras, where $\, \prod_{\alpha \in \Omega} \M_{n_{\alpha}}(D_{\alpha})$ is given the product topology with each $\, \M_{n_{\alpha}}(D_{\alpha})$ discrete.
\end{enumerate}
\end{proposition}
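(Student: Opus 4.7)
The plan is to handle the two implications separately, with $(2) \Rightarrow (1)$ being a direct verification from the definition of product topology, and $(1) \Rightarrow (2)$ requiring substantive structural work via inverse limits and Wedderburn--Artin.

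For $(2) \Rightarrow (1)$, I would write $R = \prod_{\alpha \in \Omega} \M_{n_\alpha}(D_\alpha)$ with each factor discrete and exhibit the neighborhood basis of $0$ consisting of the open two-sided ideals $I_F = \{(x_\alpha) : x_\alpha = 0 \text{ for all } \alpha \in F\}$ indexed by finite $F \subseteq \Omega$. Each quotient $R/I_F \cong \prod_{\alpha \in F} \M_{n_\alpha}(D_\alpha)$ is a finite product of simple artinian $k$-algebras, hence semisimple artinian of finite $k$-dimension. The topology is Hausdorff and complete as a product of discrete (hence complete Hausdorff) spaces, so $R$ is $k$-pseudocompact. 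For the radical, any element of $\rad(R)$ must project into $\rad(\M_{n_\alpha}(D_\alpha)) = \{0\}$ for each $\alpha$, and Hausdorffness forces it to be $0$.

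For $(1) \Rightarrow (2)$, the outline is: start from a neighborhood basis $\{U_\lambda\}$ of open two-sided ideals with $R/U_\lambda$ two-sided artinian and of finite $k$-codimension, and refine. For each $U_\lambda$, the Jacobson radical $\rad(R/U_\lambda)$ is nilpotent (artinian rings), so letting $J_\lambda$ be its preimage in $R$ gives an open two-sided ideal with $R/J_\lambda$ semisimple artinian. Using the hypothesis $\rad(R) = 0$, one shows that the family of all open two-sided ideals $J$ with $R/J$ semisimple artinian and finite-dimensional over $k$ still forms a neighborhood basis of $0$; completeness and Hausdorffness then give $R \cong \varprojlim R/J$ as topological $k$-algebras. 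Apply Wedderburn--Artin to each $R/J$ to write it as a finite product of matrix rings over finite-dimensional division $k$-algebras. Finally, turn this inverse limit of finite products into a genuine product $\prod_{\alpha \in \Omega} \M_{n_\alpha}(D_\alpha)$ by lifting the primitive central idempotents compatibly: since the transition maps in the inverse system are surjections of semisimple artinian rings, they split, and one can assemble an orthogonal family of central idempotents $\{e_\alpha\}$ in $R$ whose pieces $e_\alpha R$ are the desired simple matrix factors.

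I expect the principal obstacle to lie in the idempotent-lifting and product-assembly step. Showing that the inverse limit of the semisimple quotients genuinely topologically decomposes as a product (rather than merely as a dense subring of the product) requires one to track how primitive central idempotents in different $R/J$ correspond under the transition maps, ensure that each determines a single element of $R$, and verify that the evaluation map $R \to \prod_\alpha e_\alpha R$ is both a ring isomorphism and a homeomorphism in the product topology (with discrete factors). The refinement step ensuring that semisimple quotients form a basis of neighborhoods also demands care, and rests on the fact that in a pseudocompact ring $\rad(R)$ coincides with $\bigcap_U \pi_U^{-1}(\rad(R/U))$ as $U$ ranges over open two-sided ideals with artinian quotient.
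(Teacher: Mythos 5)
The paper does not prove this proposition; it is imported verbatim as ``Part of Proposition 3.15 in [IMR]'' and used as a black box in Corollary~\ref{alg-closed-cor}. So there is no in-paper proof to compare against, and your attempt stands or falls on its own.

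Your verification of $(2)\Rightarrow(1)$ is correct and routine. For $(1)\Rightarrow(2)$ the overall shape is right, but the step you identify as needing ``care'' has a genuine gap, and the fact you propose to rest it on is not enough. Knowing that $\rad(R)=\bigcap_{\lambda}J_{\lambda}$ (where $J_{\lambda}$ is the preimage of $\rad(R/U_{\lambda})$) gives only that the canonical map $R\to\varprojlim R/J_{\lambda}$ is injective. It does \emph{not} give that $\{J_{\lambda}\}$ is a neighborhood basis of $0$: a decreasing family of open ideals with zero intersection can perfectly well define a topology strictly coarser than the original one, and then $R\to\varprojlim R/J_{\lambda}$ would be a continuous bijection that is not a homeomorphism. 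The actual ingredient that closes this gap is linear compactness: a $k$-pseudocompact algebra is linearly compact (as an inverse limit of finite-length discrete modules), and from linear compactness one shows that a decreasing family of closed submodules with zero intersection is eventually contained in any open submodule. Concretely, fix an open ideal $V$; the images of the $J_{\lambda}$ in the finite-dimensional $R/V$ stabilize, and if the stable image were nonzero, picking a lift $y$ and intersecting the closed cosets $(y+V)\cap J_{\lambda}$ (which have the finite intersection property) would by linear compactness put $y\in\bigcap_{\lambda}J_{\lambda}=0$ modulo $V$, a contradiction. Without invoking linear compactness (or an equivalent, such as passing through the duality between $k$-pseudocompact algebras and $k$-coalgebras, which is the route [IMR] actually takes), the cofinality claim is unsupported.

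The same tool is also what makes your final product-assembly step go through: surjectivity of the evaluation map $R\to\prod_{\alpha}e_{\alpha}R$ (equivalently, that the image of $R$ in $\prod_{M}R/M$ over open maximal two-sided ideals $M$ is all of the product, not merely a dense subring) is again a linear-compactness statement, not something that follows from completeness of $R$ in its original topology alone. So the ``principal obstacle'' you flag is real, but it is not resolved by the fact you cite; you need linear compactness of pseudocompact modules as an explicit lemma in two places.
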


We are now ready for the extension of Theorem~\ref{main-theorem} in the case where the field is algebraically closed.

\begin{corollary} \label{alg-closed-cor}
Let $k$ be an algebraically closed field, $\, V$ a nonzero $k$-vector space, and $R$ a $k$-subalgebra of $\, \End_k(V)$. Then the following are equivalent.
\begin{enumerate}
\item[$(1)$] $R$ is triangularizable.
\item[$(2)$] $R$ is contained in a $k$-subalgebra $A$ of $\, \End_k(V)$ such that $A/\rad(A) \cong k^{\Omega}$ as topological $k$-algebras for some set $\, \Omega$, and $\, \rad (A)$ is topologically nilpotent.
\item[$(3)$] $R$ is contained in a $k$-subalgebra $A$ of $\, \End_k(V)$ such that $A/\rad(A)$ is $k$-pseudocompact and commutative, and $\, \rad (A)$ is topologically nilpotent.
\end{enumerate}
\end{corollary}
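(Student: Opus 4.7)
The plan is to use Theorem~\ref{main-theorem} to reduce the problem to establishing (2) $\Leftrightarrow$ (3), since the equivalence (1) $\Leftrightarrow$ (2) is already handled there. Both remaining implications will use the \emph{same} subalgebra $A$, so the work reduces to comparing two descriptions of $A/\rad(A)$ as a topological $k$-algebra.

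For (2) $\Rightarrow$ (3), I would show that $k^{\Omega}$, equipped with the product topology (each factor discrete), is $k$-pseudocompact and commutative; then the hypothesis $A/\rad(A) \cong k^{\Omega}$ as topological $k$-algebras transports these properties to $A/\rad(A)$. Commutativity is obvious. For $k$-pseudocompactness, the product topology is complete and Hausdorff (as a product of discrete spaces), and the sets $I_F = \{(a_\alpha) \mid a_\alpha = 0 \text{ for all } \alpha \in F\}$, with $F \subseteq \Omega$ finite, form a basis of neighborhoods of $0$ consisting of ideals for which $k^{\Omega}/I_F \cong k^F$ is finite-dimensional (and, in particular, two-sided artinian). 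Any open ideal of $k^\Omega$ must contain some $I_F$, so it has finite $k$-codimension. Hence $A/\rad(A)$ satisfies the requirements of (3) with the same $A$.

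For (3) $\Rightarrow$ (2), I would apply Proposition~\ref{WA-fin-dim} to $A/\rad(A)$. Since $\rad\bigl(A/\rad(A)\bigr)=\{0\}$ for any ring (a purely algebraic fact about the Jacobson radical) and $A/\rad(A)$ is $k$-pseudocompact by assumption, Proposition~\ref{WA-fin-dim} yields a topological $k$-algebra isomorphism
\[
A/\rad(A) \;\cong\; \prod_{\alpha \in \Omega} \M_{n_{\alpha}}(D_{\alpha})
\]
for some set $\Omega$, positive integers $n_\alpha$, and finite-dimensional division $k$-algebras $D_\alpha$. Since $A/\rad(A)$ is commutative, each factor $\M_{n_\alpha}(D_\alpha)$ must be commutative, which forces $n_\alpha = 1$ and each $D_\alpha$ to be a commutative finite-dimensional division $k$-algebra, i.e., a finite field extension of $k$. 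Algebraic closure of $k$ then collapses each $D_\alpha$ to $k$, giving $A/\rad(A) \cong k^{\Omega}$ topologically, which is exactly (2).

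The argument is essentially a matter of assembly, so there is no genuinely hard step; the one place requiring care is the verification that $k^{\Omega}$ meets every clause of the definition of $k$-pseudocompactness (completeness, Hausdorffness, a basis of neighborhoods of $0$ consisting of ideals with two-sided artinian quotient, and the finite $k$-codimension condition on \emph{all} open ideals), and the use of algebraic closure at the very end to rule out nontrivial finite extensions $D_\alpha \supsetneq k$.
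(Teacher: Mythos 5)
Your proof is correct and follows essentially the same structure as the paper's: reduce to $(2)\Leftrightarrow(3)$ via Theorem~\ref{main-theorem}, and for $(3)\Rightarrow(2)$ apply Proposition~\ref{WA-fin-dim} together with commutativity and algebraic closure to collapse the factors $\M_{n_\alpha}(D_\alpha)$ to $k$. The one small divergence is in $(2)\Rightarrow(3)$: the paper simply cites the $(2)\Rightarrow(1)$ direction of Proposition~\ref{WA-fin-dim} to conclude that $k^\Omega$, hence $A/\rad(A)$, is $k$-pseudocompact, whereas you verify the definition by hand (completeness and Hausdorffness of the product of discrete factors, the neighborhood basis $\{I_F\}_{F\text{ finite}}$ of ideals with artinian quotient, and the finite-codimension condition on all open ideals since each open ideal contains some $I_F$). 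Your route is a touch more self-contained, the paper's is shorter; both are sound, and you correctly flag the need for the purely algebraic fact $\rad(A/\rad(A))=\{0\}$ when invoking Proposition~\ref{WA-fin-dim} in the $(3)\Rightarrow(2)$ direction, a point the paper leaves implicit.
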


\begin{proof}
The equivalence of (1) and (2) follows from Theorem~\ref{main-theorem}. Also, by Proposition~\ref{WA-fin-dim}, if $A/\rad(A) \cong k^{\Omega}$ as topological $k$-algebras, then $A/\rad(A)$ is $k$-pseudocompact, and hence (2) implies (3).

Finally, let us suppose that (3) holds and prove (2). By Proposition~\ref{WA-fin-dim}, $A/\rad(A)$ being $k$-pseudocompact implies that $A/\rad(A) \cong \prod_{\alpha \in \Omega} \M_{n_{\alpha}}(D_{\alpha})$ as topological $k$-algebras, where each $D_{\alpha}$ is a finite-dimensional division algebra over $k$. Since we are assuming that $A/\rad(A)$ is commutative, each $n_{\alpha} = 1$, and each $D_{\alpha}$ must be a finite-dimensional field extension of $k$. Since $k$ is algebraically closed, each $D_{\alpha}$ must be isomorphic to $k$, and hence $A/\rad(A) \cong k^{\Omega}$ as topological $k$-algebras, from which (2) follows.
\end{proof}

We can now give a generalization of the well-known result, proved by McCoy~\cite{McCoy}, that a subalgebra of a matrix ring over an algebraically closed field is triangularizable if and only if the subalgebra is commutative modulo its Jacobson radical.

\begin{corollary} \label{McCoy-cor}
Let $k$ be a field, $n \in \Z^+$, and $R$ a $k$-subalgebra of $\, \M_n(k)$. Then the following are equivalent.
\begin{enumerate}
\item[$(1)$] $R$ is triangularizable.
\item[$(2)$] $R/\rad (R) \cong k^m$ as $k$-algebras, for some $m \in \Z^+$.
\end{enumerate}
If $k$ is algebraically closed, then these are also equivalent to the following.
\begin{enumerate}
\item[$(3)$] $R/\rad (R)$ is commutative.
\end{enumerate}
Moreover, if $\, (2)$ holds, then $m\leq n$.
\end{corollary}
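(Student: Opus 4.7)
The plan is to derive this corollary directly from Theorem~\ref{main-theorem} and Corollary~\ref{alg-closed-cor}, exploiting the fact that when $V$ is finite-dimensional the function topology on $\M_n(k) = \End_k(V)$ is discrete. As a result, every $k$-subalgebra $R \subseteq \M_n(k)$ is finite-dimensional and discrete, automatically closed in $\End_k(V)$, has a nilpotent (and hence topologically nilpotent) Jacobson radical, and any $k$-algebra isomorphism between two finite-dimensional discrete $k$-algebras is tautologically a topological isomorphism. These observations will let me strip all topological hypotheses out of the general machinery.

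For (1) $\Rightarrow$ (2), I would apply Theorem~\ref{main-theorem} to place $R$ inside a subalgebra $A \subseteq \M_n(k)$ with $A/\rad(A) \cong k^{\Omega}$ as topological $k$-algebras. Since $A/\rad(A)$ is finite-dimensional over $k$, the index set $\Omega$ must be finite (an infinite $k^{\Omega}$ has $k$-dimension at least $2^{|\Omega|}$). The ``moreover'' clause of Theorem~\ref{main-theorem}, available because $R$ is closed, then gives $\rad(R) = R \cap \rad(A)$, so $R/\rad(R)$ embeds as a unital $k$-subalgebra of $A/\rad(A) \cong k^{|\Omega|}$. To conclude I would establish the sublemma that every $k$-subalgebra $S$ of $k^N$ is isomorphic to $k^{m'}$ for some $m' \leq N$: any element of $\rad(S)$ is nilpotent in $k^N$ and thus zero, so $\rad(S)=\{0\}$; hence $S$ is finite-dimensional commutative semisimple, and by Wedderburn-Artin a product $\prod_{j=1}^{m'} K_j$ of fields; the unit $e_j=1_{K_j}$ is a nonzero idempotent in $k^N$, and on any coordinate $i$ where $e_j$ is nonzero the projection $\pi_i : K_j \to k$ is a unit-preserving $k$-algebra map out of a field, hence injective, forcing $K_j = k$. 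The primitive idempotents $e_1, \ldots, e_{m'}$ then form pairwise orthogonal nonzero idempotents in $k^N$ summing to $1$, so $m' \leq N \leq n$. The converse (2) $\Rightarrow$ (1) is a one-liner: take $A = R$ in Theorem~\ref{main-theorem}, noting that the given isomorphism $R/\rad(R) \cong k^m$ is automatically topological and that $\rad(R)$ is nilpotent.

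For the equivalence with (3) under algebraic closure, (1) $\Rightarrow$ (3) is immediate from (2), and for (3) $\Rightarrow$ (2) I would use that $R/\rad(R)$ is a finite-dimensional commutative semisimple $k$-algebra, hence by Wedderburn-Artin a product of finite field extensions of $k$, each of which equals $k$ since $k$ is algebraically closed. The bound $m \leq n$ in (2) is built into the subalgebra argument above; alternatively one can lift the $m$ orthogonal nonzero idempotents of $R/\rad(R) \cong k^m$ to orthogonal nonzero idempotents of $R$ (possible because $\rad(R)$ is nilpotent), producing a decomposition of $k^n$ into $m$ nonzero subspaces. I expect the only real content to lie in the sublemma classifying $k$-subalgebras of $k^N$ as powers of $k$; everything else is a routine application of the machinery already built.
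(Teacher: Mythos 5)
Your proposal follows essentially the same route as the paper: both directions of $(1)\Leftrightarrow(2)$ go through Theorem~\ref{main-theorem}, using discreteness of $\M_n(k)$ to dispose of the topological hypotheses, and $(3)\Leftrightarrow(2)$ is the same Wedderburn--Artin argument. Two small points of comparison. First, you prove the sublemma that every $k$-subalgebra of $k^N$ is isomorphic to $k^{m'}$ with $m'\le N$, whereas the paper simply cites~\cite[Corollary~4.8]{IMR}; your direct proof is sound and self-contained. Second, your primary argument for $m\le n$ has a gap: the $\Omega$ produced by Theorem~\ref{main-theorem} satisfies $|\Omega|=\dim_k(A/\rad(A))$, which is only bounded by $\dim_k A\le n^2$, not by $n$, so the chain $m'\le N\le n$ is not justified as written. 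The paper avoids this by invoking Proposition~\ref{hom-prop} directly with the specific well-ordered basis $\B$ of cardinality $n$, which yields an embedding $R/\rad(R)\hookrightarrow k^{\B}=k^n$ and hence $m\le n$ on the nose. Your alternative argument---lifting the $m$ orthogonal primitive idempotents of $R/\rad(R)\cong k^m$ through the nilpotent ideal $\rad(R)$ to orthogonal nonzero idempotents of $R\subseteq\End_k(V)$, decomposing $V$ into $m$ nonzero summands---does close the gap and is a legitimate, slightly more ring-theoretic route to the same bound.
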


\begin{proof}
Suppose that (1) holds. Since $\M_n(k)$ is finite-dimensional as a $k$-algebra, it and $R$ are discrete in the function topology, and in particular, $R$ is closed. Hence, by Theorem~\ref{main-theorem}, $R/\rad (R)$ is isomorphic to a subalgebra of $k^r$, for some $r \in \Z^+$. It is well-known that every subalgebra of $k^r$ is of the form $k^m$ for some $m \in \Z^+$. (See~\cite[Corollary 4.8]{IMR} for a proof of a more general version of this statement.) Therefore $R/\rad (R) \cong k^m$ as $k$-algebras, proving (2).

Next suppose that (2) holds. Since $R$ is finite-dimensional, and hence left artinian, $\rad(R)$ is (topologically) nilpotent (see, e.g.,~\cite[Proposition IX.2.13]{Hungerford} or~\cite[Theorem 4.12]{Lam}). Moreover, since $R$ is discrete in the function topology and $k^m$ is discrete in the product topology, $R/\rad (R) \cong k^m$ is a topological isomorphism. Therefore $R$ is triangularizable, by Theorem~\ref{main-theorem}, showing (1).

Clearly, (2) implies (3). Let us next assume that $k$ is algebraically closed and that (3) holds, and show that (2) must also hold. Since $R/\rad (R)$ is left artinian, and since it has zero Jacobson radical, $R/\rad (R)$ is semisimple (see, e.g.,~\cite[Theorem 4.14]{Lam}). Thus, $R/\rad (R)$ is isomorphic as a $k$-algebra to a finite direct product of matrix rings over division $k$-algebras (see, e.g.,~\cite[Theorem IX.5.4]{Hungerford}). Since $R/\rad (R)$ is commutative, each of these matrix rings must be $1 \times 1$, and each of the division $k$-algebras must be a field extension of $k$. Since $R/\rad (R)$ is a finite-dimensional $k$-algebra, and $k$ is assumed to be algebraically closed, each of these fields must be isomorphic to $k$. Hence $R/\rad(R) \cong k^m$ as $k$-algebras, for some $m \in \Z^+$.

Finally, if $(2)$ holds, then, as mentioned above, $\rad(R) \subseteq \tnil(R)$. Since $k^m$ has no nonzero nilpotent elements,  it follows that $\rad(R) = \tnil(R)$. Since $R$ is triangularizable, by the equivalence of $(1)$ and $(2)$, Proposition~\ref{hom-prop} implies that $R/\rad(R)$ is isomorphic to a subalgebra of $k^n$. Thus $$m = \dim_k(k^m) = \dim_k(R/\rad(R)) \leq \dim_k(k^n) = n,$$ which shows that $m\leq n$.
\end{proof}

\section*{Acknowledgements}

I am grateful to George Bergman for a very enlightening conversation about this material, and for numerous comments on an earlier draft of this paper, which have led to significant improvements. I also would like to thank Greg Oman for a pointer to the literature, Manuel Reyes for a helpful example, and the referee for a careful reading of the manuscript.

\medskip

\noindent Department of Mathematics, University of Colorado, Colorado Springs, CO, 80918, USA 

\noindent \emph{Email:} zmesyan@uccs.edu 

\end{document}